\let\oldabstract\abstract
\renewcommand\abstract{%
  \providecommand\keywords{\par\medskip\noindent\textit{Keywords:}\xspace}
  \oldabstract\noindent\ignorespaces}
\newcommand{\EE}{\mathbb{E}}
\newcommand{\R}{\mathbb{R}}
\newcommand{\bfX}{X}
\newcommand{\dd}{\mathrm{d}}
\newcommand{\ta}{\theta}
\newcommand{\diam}{\mbox{\scriptsize diam}}
\newcommand{\C}{C}
\newcommand{\rhotwo}{\rho^{(2)}(u,v;\ta)}
\newtheorem{prop}{Proposition}[section]
\newtheorem{lem}[prop]{Lemma}
\newtheorem{rmq}[prop]{Remark}
\newtheorem{theo}[prop]{Theorem}
\newcommand{\cara}[1]{\mathbbm{1}_{#1}}
\newcommand{\der}{\mathrm{d}}
\newcommand{\var}{\mbox{Var}}
\newcommand{\E}{\mathbb{E}}
\newcommand{\cvlaw}{\overset{\mathcal{L}}{\longrightarrow}}
\renewcommand{\epsilon}{\varepsilon}
\title{Adaptive estimating function inference for non-stationary determinantal point processes}
\author[1]{Fr\'ed\'eric Lavancier}
\author[2]{Arnaud Poinas}
\author[3]{Rasmus Waagepetersen}
\affil[1]{Laboratoire de Math\'ematiques Jean Leray, University of Nantes.}
\affil[2]{IRMAR, University of Rennes 1.}
\affil[3]{Department of Mathematical Sciences, Aalborg University.}
\date{}
\begin{document}
\maketitle

\begin{abstract} 
Estimating function inference is indispensable for many common point process models where the joint intensities are tractable while the likelihood function is not. In this paper we establish asymptotic normality of estimating function estimators in a very general setting of non-stationary point processes. We then adapt this result to the case of non-stationary determinantal point processes which are an important class of models for repulsive point patterns. In practice often first and second order estimating functions are used. For the latter it is common practice to omit contributions for pairs of points separated by a distance larger than some truncation distance which is usually specified in an ad hoc manner. We suggest instead a data-driven approach where the truncation distance is adapted automatically to the point process being fitted and where the approach integrates seamlessly with our asymptotic framework. The good performance of the adaptive approach is illustrated via simulation studies for non-stationary determinantal point processes and by an application to a real dataset.
 
 \keywords asymptotic normality, determinantal point processes, estimating functions, joint intensities, non-stationary, repulsive. 
 \end{abstract}

\section{Introduction}

A common feature of spatial point process models (except for the
Poisson process case) is that the likelihood function is not available
in a simple form. Numerical approximations of the likelihood function
are available \citep[see e.g.\ ][for
reviews]{moeller:waagepetersen:03,moeller:waagepetersen:07} but the
approaches are often computationally demanding and the distributional
properties of the approximate maximum likelihood estimates may be
difficult to assess. Therefore much work has focused on establishing
computationally simple estimation methods that do not require
knowledge of the likelihood function. 

In this paper we focus on estimation methods for point processes which have known joint intensity functions. This includes many cases of Cox and cluster point process models \cite[][]{moeller:waagepetersen:03,illian:etal:08,baddeley:rubak:turner:15} as well as determinantal point processes \cite[][]{macchi:75,soshnikov:00,shirai:takahashi:03,Lavancier15}. These classes of models are quite different since realizations of Cox and cluster point processes are aggregated while determinantal point processes produce regular point pattern realizations. 

Knowledge of an $n$th order joint intensity enables the use of the
so-called Campbell formulae for computing expectations of statistics
given by random sums indexed by $n$-tuples of distinct points in a
point process. Unbiased estimating functions can then be constructed
from such statistics by subtracting their expectations. So far mainly
the cases of first and second order joint intensities have been
considered where the first order joint intensity is simply the intensity
function. 

Theoretical results have been established in a variety of special
cases of first and second order estimating functions for Cox and
cluster processes 
\citep{schoenberg:05,guan:06,waagepetersen:07,guan:loh:07,waagepetersen:guan:09}
and for the closely related Palm likelihood estimators
\citep{tanaka:ogata:stoyan:08,prokesova:jensen:13,prokesova:dvorak:jensen:16}. The
common general structure of the estimating functions on the other hand
calls for a general theoretical set-up which is the first contribution
of this paper. Our set-up also covers third or higher order estimating
functions and combinations of such estimating
functions, providing a general unifying framework.

The literature on statistical inference for continuous determinantal
point processes is quite limited. A Bayesian approach is considered in \cite{affandi2014learning}, while likelihood and minimum contrast estimation methods are discussed in \cite{Lavancier15}. In fact, maximum likelihood is not feasible in general and only an approximated version of the likelihood is proposed in the stationary case in \cite{Lavancier15}, without theoretical guarantees.  On the other hand consistency and asymptotic normality of minimum contrast estimators based on the pair correlation function or the Ripley's $K$ function have been established for 
determinantal point processes in \cite{biscio:lavancier:17}, but only in the stationary case. Based on
the general set-up our second main contribution is to provide a
detailed theoretical study of estimating function estimators for general non-stationary determinantal point processes.

Specializing to second-order  estimating functions, a common approach \citep{guan:06,tanaka:ogata:stoyan:08} is to restrict the random sum to pairs of $R$-close points for some user-specified $R>0$. This may lead to faster computation and improved statistical efficiency. The properties of the resulting estimators depend strongly on $R$ but only ad hoc guidance is available for the choice of $R$. Moreover, it is difficult to account for ad hoc choices of $R$ when establishing theoretical results. Our third contribution is a simple intuitively appealing adaptive choice of $R$ which leads to a theoretically tractable estimation procedure. We demonstrate its usefulness in simulation studies for determinantal point processes as well as an example of a cluster process. The practical advantage of the adaptive choice is further illustrated by an application to a dataset of locations of Japanese pines.

\section{Estimating functions based on joint intensities}

A point process $\bfX$ on $\R^d$, $d \ge 1$, is a locally finite
random subset of $\R^d$. For $B\subseteq \R^d$, we let $N(B)$ denote
the random number of points in $\bfX \cap B$ and $|B|$  the Lebesgue measure of $B$. That $\bfX$ is locally
finite means that $N(B)$ is finite almost surely whenever $B$ is
bounded. The so-called joint intensities of a point process are
described in Section~\ref{sec:joint}. In this paper we mainly focus on
determinantal point processes, detailed  in Section~\ref{sec:asymptoticres}. A prominent feature of determinantal point processes is that they have known joint intensity functions of any order.

\subsection{Joint intensity functions and Campbell formulae}\label{sec:joint}

For integer $n \ge 1$, the joint intensity $\rho^{(n)}$ of $n$th order is defined by
\begin{equation}\label{eq:joint} 
\EE \sum_{u_1,\ldots,u_n \in \bfX}^{\neq} \cara{u_1 \in B_1,\ldots,u_n \in B_n} = \int_{\times_{i=1}^n B_i} \rho^{(n)}(u_1,\ldots,u_n) \dd u_1 \cdots \dd u_n 
\end{equation}
for Borel sets $B_i \subseteq \R^d$, $i=1,\ldots,n$, assuming that the left hand side is absolutely continuous with respect to Lebesgue measure on $\R^d$. The $\neq$ over the summation sign means that the sum is over pairwise distinct points in $\bfX$. Of special interest are the cases $n=1$ and $n=2$ where the intensity function $\rho=\rho^{(1)}$ and the second order joint intensity $\rho^{(2)}$ determine the first and second order moments of the count variables $N(B)$, $B \subseteq \R^d$.
The pair correlation function $g(u,v)$ is defined as
\[ g(u,v) = \frac{\rho^{(2)}(u,v)}{\rho(u) \rho(v)} \]
whenever $\rho(u)\rho(v)>0$ (otherwise we define $g(u,v)=0$). The
product $\rho(u)g(u,v)$ can be interpreted as the intensity of $X$ at
$u$ given that $v \in X$. Hence $g(u,v)>1$ ($<1$) means that presence
of a point at $v$ increases (decreases) the likeliness of observing
yet another point at $u$.
The Campbell formula
\[ \EE \sum_{u_1,\ldots,u_n \in \bfX}^{\neq} f(u_1,\ldots,u_n) = \int f(u_1,\ldots,u_n) \rho^{(n)}(u_1,\ldots,u_n) \dd u_1 \cdots \dd u_n \]
follows immediately from the definition of $\rho^{(n)}$ for any non-negative function $f: (\R^d)^n \rightarrow [0,\infty[$.


\subsection{A general asymptotic result for estimating functions}\label{sec:asympt_global}

Consider a parametric family of distributions $\{\mathbb{P}_{\theta}: \theta\in\Theta\}$ of point processes on $\R^d$, 
where $\Theta$ is a subset of $\R^p$. We assume a realization of the point process $X$ with distribution $\mathbb{P}_{\theta^*}$,
$\theta^*\in\mbox{Int}(\Theta)$, is observed on a bounded window $W_n\subset\R^d$.
We estimate the unknown parameter $\theta^*$ by the solution $\hat{\theta}_n$ of $e_n(\theta)=0$  (or one of the solutions if there are many)  where
\begin{equation*} \label{eq:eeGen}
e_n(\theta)= \left (
\begin{array}{c}
    \sum_{u_1,\cdots,u_{q_1}\in X\cap W_n}^{\neq}f_1(u_1,\cdots,u_{q_1};\theta)-\int_{W_n^{q_1}}f_1(u;\theta)\rho^{(q_1)}(u;\theta)\der u \\
    \vdots \\
    \sum_{u_1,\cdots,u_{q_l} \in X\cap W_n}^{\neq}f_l(u_1,\cdots,u_{q_l};\theta)-\int_{W_n^{q_l}}f_l(u;\theta)\rho^{(q_l)}(u;\theta)\der u \\
\end{array}
\right ) \end{equation*}
for $l$ given functions $f_i:(\R^d)^{q_i}\times \Theta \rightarrow \R^{k_i}$ such that $\sum_i k_i=p$. \\

A basic assumption for the following theorem 
is  that a central limit theorem is available for $e_n(\theta^*)$ (assumption (\ref{Ass:CLT})). 
In addition to this, a number of technical assumptions~(\ref{Ass:reg}) through~(\ref{Ass:LimitMatrix}) (or (\ref{Ass:LimitMatrix}')), (\ref{Ass:intenseBound}) and (\ref{Ass:intense}) regarding existence and differentiability of joint intensities as well as differentiability of the $f_i$ are needed. All these conditions are detailed in Appendix \ref{sec:proof_general} while the proof of the  theorem is given in the supplementary material.

%

\begin{theo} \label{th:general}
Under Assumptions \eqref{Ass:reg} through \eqref{Ass:LimitMatrix} (or
{\em(\ref{Ass:LimitMatrix}')}), \eqref{Ass:intenseBound} and
\eqref{Ass:intense}, with a probability tending to one as
$n\rightarrow\infty$, there exists a $|W_n|^{1/2}$-consistent sequence of roots $\hat
\ta_n$ of the estimating equations $e_n(\theta)=0$. Precisely,
 for all $\epsilon>0$, there exists $A>0$ such that
\begin{equation*}
\mathbb{P}(\exists \hat\theta_n: e_n(\hat\theta_n)=0~\mbox{and}~|W_n|^{1/2}\, \|\hat\theta_n-\theta^*\|<A)>1-\epsilon
\end{equation*}
for a sufficiently large $n$.

Moreover, if \eqref{Ass:CLT} holds true, then
\[|W_n|\Sigma_n^{-1/2}H_n(\theta^*)(\hat{\theta}_n-\theta^*)\cvlaw\mathcal{N}(0,I_p),\]
where $\Sigma_n=\var(e_n(\theta^*))$, $H_n(\theta^*)$ is defined in \eqref{Ass:LimitMatrix}, and $I_p$ is the $p\times p$ identity matrix.
\end{theo}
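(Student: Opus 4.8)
The plan is the classical three-step route for estimating-equation estimators: a law of large numbers for $e_n$ and its $\theta$-derivatives near $\theta^*$, a perturbation / inverse-function argument producing a consistent root, and a Taylor linearization of $e_n$ around $\theta^*$ fed into the assumed central limit theorem. Throughout I would work with the normalized estimating function $\bar e_n(\theta)=|W_n|^{-1}e_n(\theta)$ and use that $\theta^*\in\operatorname{Int}(\Theta)$, so that a small closed ball $\mathcal V=\bar B(\theta^*,\delta_0)$ lies in $\Theta$. For the law of large numbers, observe first that Campbell's formula applied to each $f_i$ makes $e_n(\theta^*)$ centred, so $\E_{\theta^*}\bar e_n(\theta^*)=0$; I would then bound $\var(\bar e_n(\theta^*))$ using the generalized Campbell formulae for the covariance of sums over $q_i$- and $q_j$-tuples of points, which involve joint intensities of order up to $q_i+q_j$, together with the domination hypotheses on the $f_i$ and the intensity bounds in Assumptions~\eqref{Ass:reg}--\eqref{Ass:LimitMatrix}, \eqref{Ass:intenseBound} and \eqref{Ass:intense}. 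This gives $\var(\bar e_n(\theta^*))=O(|W_n|^{-1})$, hence $\bar e_n(\theta^*)\cvproba 0$; differentiating the Campbell identity under the sum and integral (licensed by the same differentiability and domination assumptions) and applying the same variance estimate to $\nabla_\theta e_n(\theta^*)$ yields $|W_n|^{-1}\nabla_\theta e_n(\theta^*)\cvproba -H_n(\theta^*)$ with $H_n$ as in Assumption~\eqref{Ass:LimitMatrix}; finally a domination bound on the second $\theta$-derivatives gives $\sup_{\theta\in\mathcal V}|W_n|^{-1}\|\nabla^2_\theta e_n(\theta)\|=O_P(1)$, which provides the stochastic equicontinuity needed below.

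Next, for consistency, I would use that $H_n(\theta^*)$ is invertible with $\sup_n\|H_n(\theta^*)^{-1}\|<\infty$ (Assumption~\eqref{Ass:LimitMatrix}). A first-order Taylor expansion of $\bar e_n$ about $\theta^*$ combined with the two displays from the previous step shows that, for any fixed small $\delta\le\delta_0$, on an event of probability tending to one $\bar e_n$ agrees on $\bar B(\theta^*,\delta)$ with a map of the form (invertible linear)$+$(uniformly small perturbation); a standard inverse-function / Brouwer fixed-point argument then produces a root $\hat\theta_n\in B(\theta^*,\delta)$ of $\bar e_n$, hence of $e_n$, on that event. Since $\delta$ is arbitrary, a diagonal extraction yields a single sequence of roots with $\hat\theta_n\cvproba\theta^*$.

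For the asymptotic normality I would work on the event $\{\hat\theta_n\in\mathcal V\}$ (of probability $\to 1$) and apply a coordinatewise mean value theorem to $0=e_n(\hat\theta_n)$: there exist points $\tilde\theta_n$ on the segment between $\hat\theta_n$ and $\theta^*$ (one per coordinate of $e_n$) with $e_n(\theta^*)=-\nabla_\theta e_n(\tilde\theta_n)(\hat\theta_n-\theta^*)$, so $\hat\theta_n-\theta^*=|W_n|^{-1}M_n^{-1}e_n(\theta^*)$ where $M_n=-|W_n|^{-1}\nabla_\theta e_n(\tilde\theta_n)$. By the previous steps ($\hat\theta_n\cvproba\theta^*$, pointwise convergence of $|W_n|^{-1}\nabla_\theta e_n$ at $\theta^*$, and the second-derivative bound) one gets $M_n-H_n(\theta^*)\cvproba 0$ with $M_n$ invertible with probability tending to one, whence $H_n(\theta^*)M_n^{-1}\cvproba I_p$. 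Therefore
\[
|W_n|\,\Sigma_n^{-1/2}H_n(\theta^*)(\hat\theta_n-\theta^*)=\Sigma_n^{-1/2}H_n(\theta^*)M_n^{-1}e_n(\theta^*),
\]
and I would finish by Slutsky's theorem, writing the right-hand side as $\bigl(\Sigma_n^{-1/2}H_n(\theta^*)M_n^{-1}\Sigma_n^{1/2}\bigr)\,\Sigma_n^{-1/2}e_n(\theta^*)$, using the boundedness of $H_n(\theta^*)^{\pm1}$ and of the scaling factors in Assumptions~\eqref{Ass:LimitMatrix} and~\eqref{Ass:CLT} to conclude the bracketed matrix $\cvproba I_p$, and invoking $\Sigma_n^{-1/2}e_n(\theta^*)\cvlaw\mathcal N(0,I_p)$ from Assumption~\eqref{Ass:CLT}.

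The main obstacle is the first step, and within it the uniform control of $\nabla^2_\theta e_n$ on $\mathcal V$ (hence the stochastic equicontinuity of $\nabla_\theta\bar e_n$): bounding expectations and variances of random sums indexed by $q_i$-tuples of points forces one to handle joint intensities of order up to $q_i+q_j$ and to dominate, uniformly in $\theta\in\mathcal V$, each $f_i$ together with its first and second $\theta$-derivatives and the intensities $\rho^{(q_i)}$ and their derivatives. This is precisely what Assumptions~\eqref{Ass:reg}--\eqref{Ass:LimitMatrix}, \eqref{Ass:intenseBound} and \eqref{Ass:intense} are designed to deliver; once this is in place, the consistency and linearization steps are essentially routine, and the only remaining nontrivial ingredient --- the central limit theorem for $e_n(\theta^*)$ --- is provided by Assumption~\eqref{Ass:CLT} and established separately.
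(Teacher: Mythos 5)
Your proposal is correct and follows the same overall strategy as the paper: Campbell-formula variance bounds of order $O(|W_n|)$ for the random sums (which require joint intensities up to order $2q_i$ and are precisely what Assumptions \eqref{Ass:intenseBound} and \eqref{Ass:intense} are designed to control, cf.\ Lemmas \ref{lem1} and \ref{lem:stoch_bounded}), control of the Jacobian $J_n=-\frac{\dd}{\dd\theta^T}e_n$ near $\theta^*$, production of a consistent root, and a Taylor expansion fed into Assumption \eqref{Ass:CLT} via Slutsky. Where you genuinely diverge is the root-existence step: the paper does not redo the inverse-function argument but invokes the ready-made Theorem~\ref{NotSorensenAnymore} of Waagepetersen and Guan, whose hypotheses are uniform control of $J_n(\theta)-J_n(\theta^*)$ only on the shrinking neighbourhoods $M_n^\alpha(\theta^*)$ of radius $\alpha/\sqrt{|W_n|}$ together with the quadratic-form condition \eqref{vpmin}, and which directly returns $\sqrt{|W_n|}$-consistent roots; you instead run a Brouwer/inverse-function argument on a fixed small ball, using the (still available) uniform second-derivative bound there, and recover the rate only afterwards from the mean-value identity. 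A side benefit of your packaging is that it treats \eqref{Ass:LimitMatrix} and (\ref{Ass:LimitMatrix}') on the same footing through invertibility of $H_n(\theta^*)$ with bounded inverse (which indeed follows from either assumption), whereas the paper must handle (\ref{Ass:LimitMatrix}') separately in Lemma~\ref{lem:alt} by premultiplying $e_n$ by $H_n(\theta)^{-1}$, precisely because the quadratic-form condition of Theorem~\ref{NotSorensenAnymore} can fail for a non-symmetric but invertible $H_n$. One small caution: your final conjugation by $\Sigma_n^{\pm 1/2}$ implicitly requires the condition number of $|W_n|^{-1}\Sigma_n$ to stay bounded (a lower bound on $\Sigma_n$ of order $|W_n|$, supplied in the DPP setting by \eqref{Ass:VarControl}); the paper's own concluding Slutsky step is equally terse on this point, so this is not a defect relative to the paper's argument.
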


\begin{rmq}
While the parameter $\theta^*$ is generally uniquely defined (in the sense that $\theta\mapsto \mathbb{P}_{\theta}$ is injective) and verifies $\E(e_n(\theta^*))=0$, the solution to $e_n(\theta)=0$ may not be unique. The above theorem states that there exists a consistent and asymptotically Gaussian sequence of solutions, but unicity is not guaranteed. This drawback is unfortunately common in most asymptotic results for estimating functions inference, see the references in introduction, \cite{sorensen99},  the handbook by  \cite{heyde1997} or the discussion in \cite[Section~5.6]{van2000}. 
Nonetheless it can be proved that the solution is unique for $n$ sufficiently large  whenever $\lim_{n\to\infty} e_n(\theta)/|W_n|$ admits a unique zero, see~\cite{jacod2017}. 
But simple sufficient conditions to ensure the latter condition elude us.
\end{rmq}

\subsection{Second order estimating functions}\label{sec:secondorder}

Referring to the previous section, much attention has been devoted to instances of the case $l=1$, $q_1=2$ and $k_1=p$. In this case we obtain a second-order estimating function  of the form 
\begin{equation}\label{eq:ef}  e_n(\theta)= \sum_{u,v \in \bfX \cap W_n}^{\neq} f(u,v;\ta) - \int_{W_n^2} f(u,v;\ta) \rho^{(2)}(u,v;\ta) \dd u \dd v. 
\end{equation}
In \cite{guan:06}, the author noted that for computational and statistical efficiency it may be advantageous to use only close pairs of points rather than all pairs of points. Thus in \eqref{eq:ef} it is common practice to introduce an indicator $\cara{\|u-v\| \le R}$ for some constant $0<R$ or choose $f$ so that $f(u,v)=0$ whenever $\|u-v\|>R$. We discuss a method for choosing $R$ in Section~\ref{sec:adaptiveR}. 

The general form \eqref{eq:ef} includes e.g.\ the score functions of second-order composite likelihood \citep{guan:06,waagepetersen:07} and Palm likelihood functions \citep{tanaka:ogata:stoyan:08,prokesova:jensen:13,prokesova:dvorak:jensen:16} as well as score functions of minimum contrast object functions based on non-parametric estimates of summary statistics as the Ripley's $K$ or the pair correlation function. For the second-order composite likelihood defined in equation (4) in \cite{guan:06},
\[ f(u,v;\ta)= \frac{\nabla_\theta \rho^{(2)}(u,v;\ta)}{\rhotwo}- \frac{\int_{W^2} \nabla_\theta \rho^{(2)}(u,v;\ta) \dd u \dd v}{ \int_{W^2} \rhotwo \dd u \dd v} \]
while 
\begin{equation}\label{eq:secondorder} f(u,v;\ta)= \frac{\nabla_\theta \rho^{(2)}(u,v;\ta)}{\rhotwo} \end{equation}
for the second-order composite likelihood proposed in \cite{waagepetersen:07}. The score of the Palm likelihood as generalized to the inhomogeneous case in \cite{prokesova:dvorak:jensen:16} is obtained with
\[ f(u,v;\ta) = \frac{\nabla_\theta \frac{\rhotwo}{\rho(u;\ta)}}{\rhotwo/\rho(u;\ta)} - \frac{1}{N(W)-1} \int_W \nabla_\theta\left( \frac{\rho^{(2)}(u,w;\ta)}{\rho(u;\ta)}\right) \dd  w.  \]
In \cite{prokesova:dvorak:jensen:16}, the authors also regarded the second-order composite likelihood proposed in \cite{waagepetersen:07} as a generalization of the stationary case Palm likelihood but the interpretation as a second-order composite likelihood given in \cite{waagepetersen:07} is more straightforward.

Considering a class of estimating functions of the form \eqref{eq:ef} a natural question is what is the optimal choice of $f$?  A solution to this problem is provided in \cite{deng:etal:16} where an approximation of the optimal $f$ is obtained by solving numerically a certain integral equation. This yields a statistically optimal estimation procedure but is computationally demanding and requires specification of third and fourth order joint intensities. When computational speed and ease of use is an issue, there is still scope for simpler methods. Moreover, given several (simple) estimation methods, it is possible to combine them adaptively in order to build a final estimator that achieves better properties than each initial estimator, see \cite{lavancier2016general,LR2017spatial}. 
 
\subsection{Adaptive version}\label{sec:adaptiveR}

Consider second-order composite likelihood using \eqref{eq:secondorder} but only $R$ close
pairs. The resulting weight function is then of the form
\begin{equation}\label{eq:CL_R} f_R(u,v;\ta)= \cara{\|u-v\| \le R} \frac{\nabla_\theta \rho^{(2)}(u,v;\ta)}{\rhotwo}. \end{equation}
As mentioned in the previous section, using only $R$ close pairs may be beneficial both for statistical efficiency and computational tractability. However, the possible improvement depends strongly on the
chosen $R$. Simulation studies such as in
\cite{prokesova:dvorak:jensen:16} and \cite{deng:etal:16} usually
compare results for several values of $R$ corresponding to different
multiples of some parameter associated with `range of
correlation'. For a cluster process this parameter could e.g.\ be the
standard deviation of the distribution for dispersal of offspring
around parents. For a determinantal point process the parameter would
typically be a correlation scale parameter in the kernel of the
determinantal point process, see Section~\ref{sec:asymptoticres}. In practice these parameters are not
known and among the quantities that need to be estimated. In \cite{guan:06} it is suggested to choose an $R$ that minimizes a goodness of fit criterion for the fitted point process model while the choice of $R$ in \cite{tanaka:ogata:stoyan:08} and \cite{waagepetersen:guan:09} is done by inspection of a non-parametric estimate of the pair correlation function (a similar appproach is suggested by \cite{heagerty:lele:98} and \cite{bai:etal:14} in the context of pairwise composite likelihood for random fields). Both approaches imply extra work and ad hoc decisions by the user and it becomes very complex to determine the statistical properties of the resulting parameter estimates. 

  
   A typical behaviour of many pair correlation functions is that 
  $g(u,v;\ta)$ converges to a limiting value of 1 when $\|u-v\|$
  increases and $|g(u,v;\ta)-1| \le M(u,v;\ta)$ where $$M(u,v;\ta)=\max_{s\,\in\{u,v\}} |g(s,s;\ta)-1|.$$ Note that for DPPs,  $M(u,v;\ta)=1$ (see the next section) and for stationary point processes, $M(u,v;\ta)$ does not depend on $u$ and $v$.
  If $g(u,v;\ta)=1$ for $\|u-v\|>r_0$ then counts of points are uncorrelated when they are observed in regions separated by a distance of $r_0$.

  Following the idea that $R$ should depend on some range property
  of the point process we therefore suggest to replace the constraint $\|u-v\|<R$ in  \eqref{eq:CL_R} by the constraint \[\frac{|g(u,v;\ta)-1|}{M(u,v;\ta)}>\epsilon,\]
for a small $\epsilon$. If e.g.\ $\epsilon=1\%$ this means that we only consider pairs of points $(u,v)$ so that the difference between $g(u,v;\ta)$ and the limiting value $1$ is within 1\% of the maximal value $M(u,v;\ta)$. Note that this choice of pairs of points is adaptive in that it depends on $\ta$. 

We then modify the function $f_R$ to be
\begin{equation}\label{eq:f_adapt} f_{\text{adap}}(u,v;\ta)=
 w \left (\epsilon \frac{ M(u,v;\ta)}{g(u,v;\ta)-1} \right ) \frac{\nabla_\theta
    \rho^{(2)}(u,v;\ta)}{\rhotwo} \end{equation}
   where $w$ is some weight function of bounded support
$[-1,1]$. Later on, when establishing asymptotic results, we will also
assume that $w$ is differentiable. A common example of admissible weight function is $w(r)=e^{1/(r^2-1)}$ for $-1\leq r\leq 1$, while $w(r)=0$ otherwise. The user needs to specify a value of $\epsilon$ but in contrast to the original tuning parameter $R$, $\epsilon$ has an intuitive meaning independent of the underlying point process. We  choose $\epsilon=1\%$. 

We emphasize that choosing $\epsilon=1\%$ is not necessarily optimal. An optimal $\epsilon$ might be found by maximizing the Godambe information as a function of $\epsilon$ but this is not straightforward and the computational advantages of our approach would be lost. In fact, if Godambe optimality is key, we suggest to consider the previously mentioned approach by \cite{deng:etal:16} to identify an optimal second order estimating function.

\section{Asymptotic results for determinantal point processes}\label{sec:asymptoticres}
A point process $X$ is a determinantal point process (DPP for short) with kernel $K:\R^d \times \R^d \rightarrow \R$ if for all $n \ge 1$, the joint intensity $\rho^{(n)}$ exists and is of the form
\[ \rho^{(n)}(u_1,\ldots,u_n) = \text{det}[K](u_1,\ldots,u_n) \]
for all $\{u_1,\ldots,u_n \} \subset \R^d$, where $[K](u_1,\ldots,u_n)$ is the matrix with entries $K(u_i,u_j)$. The intensity function is thus $\rho(u)=K(u,u)$, $u \in \R^d$. If a determinantal point process with kernel $K$ exists it is unique. General conditions for existence are presented in \cite{Lavancier15}. In particular, if $K$ admits the form 
\begin{equation}\label{dppSOIRS} K(u,v)=\sqrt{\rho(u)\rho(v)}C(u-v) \end{equation}
for a function $C: \R^d \rightarrow \R$ with $C(0)=1$, then a sufficient condition for existence of a DPP with kernel $K$ is that $\rho$ is bounded and that $C$ is a square integrable continuous covariance function with spectral density bounded by $1/\|\rho\|_{\infty}$. The normalization $C(0)=1$ ensures that $\rho$ is the intensity of the DPP. 

We now consider a parametric family of DPPs on $\R^d$ with kernels $K_{\theta}$ where $\theta\in\Theta$ and $\Theta \subseteq \mathbb R^p$ \citep[see][for examples of such families]{Lavancier15, Biscio16}. Henceforth, we assume that $K_\ta$ is symmetric, continuous and the DPP with kernel $K_\ta$ exists for all $\ta \in \Theta$. 
Note that in general, it is possible that two different kernels generate the same DPP distribution. This identifiability issue especially arises  in the case of a discrete state space, where the distribution of  a DPP is only identified up to flips of the signs of the rows and columns of its matrix kernel (see \cite{engel1980} or \cite{rising2015}). However, in the continuous case, corresponding to our framework, the kernel of a DPP is uniquely determined whenever the intensity function is positive, see Proposition~\ref{supp-Ident} and its corollary in the supplementary material. Assuming a positive intensity function is not restrictive for statistical applications of DPPs.

An expression for the likelihood of a
DPP on a bounded window is provided in \cite{Lavancier15}, where
likelihood based inference for stationary DPPs is discussed. However, the expression depends on a spectral representation
of $K$ which is rarely known in practice and must be approximated
numerically. Letting $n$ denote the number of observed points, the
likelihood further requires the computation of an $n\times n$ dense
matrix which can be time consuming for large $n$. As an alternative, minimum contrast estimation is considered in \cite{Biscio16}, based on the pair correlation function or Ripley's $K$-function, but only for stationary DPPs. In the following, we consider general non-stationary DPPs and the estimator $\hat \theta_n$ obtained by solving $e_n(\theta)=0$ where $e_n$ is given by \eqref{eq:ef}. Note that the distribution for any classical parametric DPP model  \cite[showcased in][]{Lavancier15, Biscio16} is uniquely determined by its first two order intensity functions, in the sense that $\theta\mapsto (\rho(.;\theta),\rho^{(2)}(.,.;\theta))$ is injective. This justifies the use of second order estimating functions for DPPs.

We establish in Section \ref{sec:asymptDPP_global} using Theorem \ref
{th:general} the asymptotic properties of the estimate $\hat \theta_n$ where $e_n$ is given by \eqref{eq:ef} for a wide class of test functions $f$. 
In Section \ref{sec:asympt_sep}, we focus on a particular case of the
DPP model, where the parameter $\theta=(\beta,\psi)$  can be separated into a parameter $\beta$ only appearing in the intensity function and a
parameter $\psi$ only appearing in the pair correlation function.
Following \cite{waagepetersen:guan:09}, it is natural to consider a
two-step estimation procedure where in a first step $\beta$ is
estimated by a Poisson likelihood score estimating function, which provides a consistent estimate of the intensity  \cite{schoenberg:05}, and in a
second step the remaining parameter $\psi$ is estimated by a second
order estimating function as in \eqref{eq:ef}, where $\beta$ is
replaced by $\hat\beta_n$ obtained in the first step. The asymptotic
properties of this two-step procedure again follow as a special case of Theorem \ref{th:general}.

\subsection{Second order estimating functions for DPPs}\label{sec:asymptDPP_global}

In this part and in the rest of the document, we consider the following notation. For any set $W\subset\R^d$ and $r>0$, we write $W\oplus r := \bigcup_{x\in W} B(x,r)$ and $W\ominus r := \{x\in W, B(x,r)\subset W\}$ for the dilation and erosion of the set $W$ where $B(x,r)$ denotes the ball centered in $x$ with radius $r$. 

We assume a realization of a DPP $X$ with kernel $K_{\theta^*}$, $\theta^*\in\mbox{Int}(\Theta)$, is observed on a bounded window $W_n\subset\R^d$. We estimate the unknown parameter $\theta^*$ by the solution $\hat{\theta}_n$ of $e_n(\theta)=0$ where $e_n(\theta)$ is given by \eqref{eq:ef} for a given $\R^p$-valued function $f$. Therefore, we are in a special case of the set-up in Section \ref{sec:asympt_global} with $l=1$, $q_1=2$, $k_1=p$ and we assume that $f_1=f$ satisfies  the assumptions \eqref{Ass:reg} through \eqref{Ass:LimitMatrix} (or (\ref{Ass:LimitMatrix}')) listed in Appendix~\ref{sec:proof_general}.
The condition \eqref{Ass:reg} in this case demands that $\theta\mapsto f(u,v;\theta)$ is twice continuously differentiable in a  neighbourhood of $\theta^*$ and for $\ta$ in this neighbourhood, the derivatives are bounded with respect to $(u,v)$ uniformly in $\ta$. Moreover, from \eqref{Ass:fbound}, there exists $R>0$ such that  for all $\theta$ in a neighbourhood of $\theta^*$,
\begin{equation}\label{ass_f_tau}f(u,v;\theta)=0\quad\text{if}\quad \|u-v\|>R.\end{equation}
Concerning  \eqref{Ass:LimitMatrix} (or (\ref{Ass:LimitMatrix}')),  this condition controls the asymptotic behaviour of the matrix $H_n(\theta)$ given by
\begin{equation*}
H_n(\theta)=\frac{1}{|W_n|}\int_{W_n^{2}} f(u,v;\ta)\nabla_\theta\rho^{(2)}(u,v;\ta)^T \der u\der v,
\end{equation*}
where we recall that in this setting 
\begin{equation}\label{rho2DPP}\rho^{(2)}(u,v;\ta)=K_{\theta}(u,u)K_{\theta}(v,v)-K_{\theta}(u,v)^2.\end{equation}

The assumptions \eqref{Ass:LimitMatrix} and (\ref{Ass:LimitMatrix}') are technical and needed for the  consistency  of the estimation procedure. 
When $H_n$ is a symmetric matrix, assumption  \eqref{Ass:LimitMatrix} seems simpler to verify than (\ref{Ass:LimitMatrix}'). 
As an important example,  when $f$ is defined as in \eqref{eq:f_adapt},  we prove in Lemmas~\ref{F3stat} and \ref{F3nonstat} that  \eqref{Ass:LimitMatrix} is generally satisfied even if $X$ is not stationary.

Finally, as shown in the proof of Theorem \ref{th:generalDPP} below, the assumptions \eqref{Ass:intenseBound} through \eqref{Ass:CLT} in Theorem \ref{th:general} are implied by the following:
%
%
%
%
%
%
\begin{enumerate}[label=(D\arabic*),ref=D\arabic*]

\item $\theta\mapsto K_{\theta}(u,v)$ is twice continuously differentiable in a neighborhood of $\theta^*$, for all $u,v\in\R^d$. Moreover, the first and second derivative of $K_\theta$ with respect to $\theta$ are bounded with respect to $u,v\in\R^d$ uniformly in $\theta$ in a neighborhood of $\theta^*$.
\label{Ass:regDPP}

\item The kernel $K_{\theta^*}$ satisfies, for some $\epsilon>0$,
\[\sup_{\|u-v\|>r}K_{\theta^*}(u,v)=o(r^{-(d+\epsilon)/2}).\]
\label{Ass:Kernel}
 
\item $\liminf_{n}\lambda_{\min}(|W_n|^{-1}\Sigma_n)>0$ where $\Sigma_n:=\var(e_n(\theta^*))$ and $\lambda_{\min}(|W_n|^{-1}\Sigma_n)$ denotes the smallest eigenvalue of $|W_n|^{-1}\Sigma_n$. \label{Ass:VarControl}
\end{enumerate}

\begin{enumerate}[label=(W),ref=W]
\item $\exists\epsilon>0~\mbox{s.t.}$~$|\partial
  W_n\oplus(R+\epsilon)|=o(|W_n|)$, where $\partial$ in this context denotes the boundary of a set, $R$ is defined in \eqref{ass_f_tau},  and $|W_n|\to\infty$, as
  $n\to\infty$. \label{Ass:ShapeWindow}
\end{enumerate}

Let us briefly comment on these assumptions. \eqref{Ass:regDPP} is a standard regularity assumption. Condition \eqref{Ass:Kernel} is not restrictive since all standard parametric kernel families  satisfy $\sup_{\|u-v\|>r}K_{\theta}(u,v) = O(r^{-(d+1)/2})$, including the most repulsive stationary DPP \citep[see][]{Lavancier15,Biscio16}. Condition \eqref{Ass:VarControl} ensures that the asymptotic variance in the central limit theorem below is not degenerated. Finally, Assumption \eqref{Ass:ShapeWindow} makes specific the fact that $W_n$ is not too irregularly shaped and is not bounded in any direction. It is for instance fulfilled if $W_n$ is a Cartesian product of $d$ intervals whose lengths tends to infinity.

\begin{theo} \label{th:generalDPP}
Under Assumptions \eqref{Ass:regDPP} and \eqref{Ass:Kernel}, if assumptions \eqref{Ass:reg} through \eqref{Ass:LimitMatrix} (or {\em(\ref{Ass:LimitMatrix}')}) are satisfied for $f_1=f$, with a probability tending to one as
$n\rightarrow\infty$, there exists a $|W_n|^{1/2}$-consistent sequence of roots $\hat
\ta_n$ of the estimating equations $e_n(\theta)=0$.
If moreover \eqref{Ass:ShapeWindow} and \eqref{Ass:VarControl} holds true, then 
\[|W_n|\Sigma_n^{-1/2}H_n(\theta^*)(\hat{\theta}_n-\theta^*)\cvlaw\mathcal{N}(0,I_p).\]
\end{theo}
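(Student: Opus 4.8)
The plan is to deduce Theorem~\ref{th:generalDPP} from the general Theorem~\ref{th:general} by verifying, in the DPP setting, the remaining hypotheses \eqref{Ass:intenseBound}, \eqref{Ass:intense} and \eqref{Ass:CLT}; the assumptions \eqref{Ass:reg} through \eqref{Ass:LimitMatrix} (or \eqref{Ass:LimitMatrix}') are assumed directly. Since here $l=1$, $q_1=2$, $k_1=p$, the joint intensities appearing in Theorem~\ref{th:general} are $\rho^{(2)}$, $\rho^{(3)}$ and $\rho^{(4)}$, each given by a determinant $\text{det}[K_\theta](\cdot)$. First I would record that, by \eqref{Ass:regDPP}, the kernel and its first two $\theta$-derivatives are uniformly bounded, hence so are $\rho^{(n)}(\cdot;\theta)=\text{det}[K_\theta](\cdot)$ for $n\le 4$ together with their $\theta$-derivatives, uniformly in $(u_1,\ldots,u_n)$ and in $\theta$ near $\theta^*$. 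This is exactly the type of boundedness required by the regularity/intensity conditions \eqref{Ass:intenseBound} and \eqref{Ass:intense}, and the differentiability under the integral sign used in the proof of Theorem~\ref{th:general} follows by dominated convergence using these uniform bounds together with \eqref{ass_f_tau}, which restricts the $(u,v)$-integration to the bounded-width set $\{\|u-v\|\le R\}$.

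The core of the argument is establishing the central limit theorem \eqref{Ass:CLT} for $|W_n|^{-1/2}e_n(\theta^*)$. Here $e_n(\theta^*)$ is a centred statistic of the form $\sum^{\neq}_{u,v\in X\cap W_n} f(u,v;\theta^*) - \EE[\cdot]$ with $f$ supported on $\{\|u-v\|\le R\}$. I would invoke a central limit theorem for DPPs of the kind established via the cumulant method: one shows that all cumulants of order $\ge 3$ of $|W_n|^{-1/2}e_n(\theta^*)$ tend to zero. This relies on two ingredients. The first is that the joint cumulant densities of a DPP can be written in terms of products of kernel values around cycles, and admit fast-decay bounds inherited from a decay condition on $K_{\theta^*}$; this is precisely the role of \eqref{Ass:Kernel}, the condition $\sup_{\|u-v\|>r}K_{\theta^*}(u,v)=o(r^{-(d+\epsilon)/2})$, which guarantees the relevant kernel products are integrable and yields the summability needed to kill higher-order cumulants after the $|W_n|^{-1/2}$ scaling. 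The second is the localisation from \eqref{ass_f_tau}: because $f$ vanishes for $\|u-v\|>R$, $e_n(\theta^*)$ is, up to a boundary term, a sum of weakly dependent contributions indexed by bounded regions, and \eqref{Ass:ShapeWindow} ensures that the boundary layer $\partial W_n\oplus(R+\epsilon)$ is negligible relative to $|W_n|$. Together with \eqref{Ass:VarControl}, which prevents the limiting variance $|W_n|^{-1}\Sigma_n$ from degenerating, the cumulant argument yields $|W_n|^{-1/2}\Sigma_n^{-1/2}e_n(\theta^*)\cvlaw\mathcal N(0,I_p)$, i.e. \eqref{Ass:CLT}.

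With \eqref{Ass:intenseBound}, \eqref{Ass:intense} and \eqref{Ass:CLT} in hand, Theorem~\ref{th:general} applies verbatim: existence of a consistent sequence of roots $\hat\theta_n$ of $e_n(\theta)=0$ follows from the first part, and the asymptotic normality $|W_n|\Sigma_n^{-1/2}H_n(\theta^*)(\hat\theta_n-\theta^*)\cvlaw\mathcal N(0,I_p)$ from the second. I expect the main obstacle to be the cumulant estimates underlying \eqref{Ass:CLT}: one must carefully track how the $o(r^{-(d+\epsilon)/2})$ decay of $K_{\theta^*}$ propagates through the combinatorial expansion of higher-order cumulant densities of the DPP (a sum over connected diagrams / cyclic products of kernel entries), bound each term by an integrable function on $(\R^d)^{m-1}$ after factoring out one spatial integration producing the $|W_n|$ factor, and conclude that the normalised cumulants of order $m\ge 3$ are $O(|W_n|^{1-m/2})\to 0$; the boundary correction from \eqref{Ass:ShapeWindow} and the non-degeneracy input \eqref{Ass:VarControl} then need to be woven in so that the Cramér–Wold device delivers the multivariate statement.
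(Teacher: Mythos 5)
Your proposal follows exactly the paper's route: reduce to Theorem~\ref{th:general} in the case $l=1$, $q_1=2$ by checking \eqref{Ass:intenseBound}, \eqref{Ass:intense} and \eqref{Ass:CLT} for the DPP, the paper deducing \eqref{Ass:intenseBound} from \eqref{Ass:regDPP} via \eqref{rho2DPP} and simply citing \cite{poinas17} for the last two, where you instead sketch the underlying cumulant argument. The one imprecision is that \eqref{Ass:intense} is not a pure boundedness condition: the required control of $\varphi_i$, i.e.\ of $\rho^{(4)}-\rho^{(2)}\otimes\rho^{(2)}$, rests on the kernel decay \eqref{Ass:Kernel} (exactly as for the CLT), not on \eqref{Ass:regDPP} alone.
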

\begin{proof} We deduce from \eqref{rho2DPP} that \eqref{Ass:regDPP} implies \eqref{Ass:intenseBound}. Moreover, it was shown in \cite{poinas17} that \eqref{Ass:intense} is a consequence of \eqref{Ass:Kernel} and that \eqref{Ass:CLT} is a consequence of \eqref{Ass:Kernel}, \eqref{Ass:VarControl} and \eqref{Ass:ShapeWindow}. Thus, we can conclude by applying Theorem \eqref{th:general} in the case $l=1$ and $q_1=2$.
\end{proof}

In the case of a stationary $X$ and $f$ given by \eqref{eq:f_adapt}, the following lemma shows that \eqref{Ass:LimitMatrix} is satisfied under mild assumptions that are violated only in degenerate cases. For instance, if $p=1$, the last assumption boils down to $\nabla_\theta \rho^{(2)}(0,t;\ta^*) \neq 0$ for some $t \neq 0$ such that $|K_{\ta^*}(t)|>\sqrt{\epsilon}K_{\ta^*}(0)$.
 In particular it is not difficult to verify these assumptions for the stationary parametric kernels considered in our simulation study of Section~\ref{sec:simu}, namely the Bessel-type and the Gaussian kernels, see the supplementary material.

\begin{lem} \label{F3stat}
Assume \eqref{Ass:ShapeWindow} and \eqref{Ass:Kernel}, suppose  $X$ is stationary and let $f$ be as in \eqref{eq:f_adapt}. 
Assume that $w$ is positive on $[0,1)$, vanishes on $[1,\infty)$ and is differentiable on $\R_+$. If $t\mapsto f(0,t;\ta^*)$ is integrable and ${\rm span}\{\nabla_\theta\rho^{(2)}(0,t;\ta^*)  : |K_{\ta^*}(t)| > \sqrt{\epsilon}K_{\ta^*}(0) \}=\R^p$, then  
\eqref{Ass:LimitMatrix} is satisfied.
\end{lem}
\begin{proof}
By definition of $w$ and \eqref{Ass:Kernel}, there exists $R>0$ such that $f(0,t;\theta^*)=0$ when $\|t\|\geq R$. By Lemma \ref{lem_general}, since $t\mapsto f(0,t;\ta^*)$ is integrable then $H_n(\theta^*)$ converges towards the positive semi-definite matrix $H(\theta^*)=\int_{\|t\|< R}h(t)\der t$ where the function $h:\R^d\to \R^{p\times p}$ is defined by
\[h(t) = w \left ( \frac{\epsilon K_{\ta^*}(0)^2}{K_{\ta^*}(t)^2} \right )\frac {\nabla_\theta\rho^{(2)}(0,t;\ta^*) \nabla_\theta\rho^{(2)}(0,t;\ta^*)^T}{\rho^{(2)}(0,t;\ta^*) } .\]
In this case, proving \eqref{Ass:LimitMatrix} is equivalent to showing that $\phi^T H(\theta^*) \phi=0$ only if $\phi=0$. For this, let $A$ be the set of $t$ such that $|K_{\ta^*}(t)|>\sqrt{\epsilon}K_{\ta^*}(0)$, $\phi\in\R^p$ and note that since $w( \epsilon K_{\ta^*}(0)^2/K_{\ta^*}(t)^2)>0$ for $t \in A$ and $h(t)$ is continuous and positive semi-definite,
\begin{align*}
\phi^T H(\ta^*)\phi=0 \quad&\Leftrightarrow\quad \forall t\in A,\ \phi^Th(t)\phi=0\\
&\Leftrightarrow\quad \forall t\in A,\ \nabla_\theta\rho^{(2)}(0,t;\ta^*)^T \phi =0\\
&\Leftrightarrow\quad \phi\in \left({\rm span}\{\nabla_\theta\rho^{(2)}(0,t;\ta^*) \, : \, t\in A\}\right)^{\perp}.
 \end{align*} 
By assumption ${\rm span}\{\nabla_\theta\rho^{(2)}(0,t;\ta^*) \, : \,  t\in A\}=\R^p$ whereby $\phi=0$, which  concludes the proof. 
\end{proof}

Similarly, we can show that even in the non-stationary case, condition \eqref{Ass:LimitMatrix} is satisfied for the function in \eqref{eq:f_adapt} but under slightly stronger assumptions on $\nabla_\theta\rho^{(2)}(u,v;\ta^*)$.
Namely, we demand  that all functions  $v\mapsto\nabla_\theta\rho^{(2)}(u,v;\ta^*)$ are not contained  in a single hyperplane of $\R^p$ nor confined around $0$. This is similar in essence to what we have assumed in the previous corollary but with the need of a uniform condition with respect to $u$. Functions that do not satisfy these requirements are arguably degenerate. In particular, a straightforward calculus carried out in the supplementary material shows that the non-stationary Bessel-type kernel used in our simulation study satisfies these assumptions.

\begin{lem} \label{F3nonstat}
Assume \eqref{Ass:ShapeWindow}, \eqref{Ass:Kernel} and that $K_{\theta^*}$ is bounded. Let $f$ be as in \eqref{eq:f_adapt} and define $h:(\R^d)^2 \to \R^{p\times p}$ by
\[h(u,v) =w \left ( \frac{\epsilon K_{\ta^*}(u,u)K_{\ta^*}(v,v)}{K_{\ta^*}(u,v)^2} \right ) \frac {\nabla_\theta\rho^{(2)}(u,v;\ta^*) \nabla_\theta\rho^{(2)}(u,v;\ta^*)^T}{\rho^{(2)}(u,v;\ta^*) } .\]
Assume that $w$ is positive on $[0,1)$, vanishes on $[1,\infty)$ and is differentiable on $\R_+$. If $\sup_{u \in \R^d}\|\int_{\R^d} h(u,v)\der v\|<+\infty$ and if there exists $\mu>1$ and $\delta>0$ such that for all $u\in\R^d$ and for all unit vectors $\phi$ of $\R^p$ there exists a subset $A$ of $\{v \, :\, K_{\ta^*}(u,v)^2>\mu\epsilon K_{\ta^*}(u,u)K_{\ta^*}(v,v) \}$ of positive Lebesgue measure $|A|>0$ and satisfying
$$\forall v\in A,\, |\phi^T\nabla_{\theta}\rho^{(2)}(u,v;\ta^*)|>\delta$$
then \eqref{Ass:LimitMatrix} is satisfied.
\end{lem}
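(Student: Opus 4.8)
The plan is to mimic the structure of the proof of Lemma~\ref{F3stat} but carry through the additional work needed to handle the dependence on $u$. First I would invoke the boundedness of $w$ together with \eqref{Ass:Kernel} to produce an $R>0$ such that $h(u,v)=0$ whenever $\|u-v\|>R$; here one uses that $K_{\ta^*}(u,u)K_{\ta^*}(v,v)$ stays bounded (by assumption and by \eqref{Ass:Kernel} applied at $u=v$, since $K_{\ta^*}$ is bounded) while $K_{\ta^*}(u,v)^2\to 0$ as $\|u-v\|\to\infty$, so the argument of $w$ eventually exceeds $1$. Combined with the integrability of $\sup_u\|h(u,\cdot)\|$ this lets me apply Lemma~\ref{lem_general} (the general convergence lemma) to conclude that $H_n(\theta^*)$ has a limit which is positive semi-definite, and more precisely that $\liminf_n \lambda_{\min}$ of the symmetrized $H_n(\theta^*)$ is controlled from below by an averaged quantity built from $h$.

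The heart of the argument is then a \emph{uniform} lower bound on $\phi^T h(u,v)\phi$ integrated over $v$, valid for every unit vector $\phi$ and every $u$. Fix a unit vector $\phi$ and a point $u$. On the set $A=A(u,\phi)$ provided by the hypothesis we have $K_{\ta^*}(u,v)^2>\mu\epsilon K_{\ta^*}(u,u)K_{\ta^*}(v,v)$ with $\mu>1$, so the argument $\epsilon K_{\ta^*}(u,u)K_{\ta^*}(v,v)/K_{\ta^*}(u,v)^2$ of $w$ lies in $[0,1/\mu)$, a set bounded away from $1$; since $w$ is positive and continuous on $[0,1[$, it is bounded below by some $c=c(\mu)>0$ there. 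On the same set $|\phi^T\nabla_\theta\rho^{(2)}(u,v;\ta^*)|>\delta$ and $\rho^{(2)}(u,v;\ta^*)$ is bounded above (again via boundedness of $K_{\ta^*}$ and \eqref{rho2DPP}) by some constant $M$. Hence $\phi^T h(u,v)\phi \ge c\,\delta^2/M$ for $v\in A$, and integrating, $\int \phi^T h(u,v)\phi\,\der v \ge c\,\delta^2|A|/M$. The remaining subtlety is that $|A(u,\phi)|$ is only known to be positive, not bounded below uniformly; I would address this by a compactness argument — the map $(u,\phi)\mapsto \int \phi^T h(u,v)\phi\,\der v$ is continuous and strictly positive, the unit sphere is compact, and by the window-growth assumption \eqref{Ass:ShapeWindow} the relevant infimum over the (translated) points $u$ that matter can be taken over a set on which one extracts a uniform positive lower bound; alternatively one phrases \eqref{Ass:LimitMatrix} directly in terms of $\liminf$ of the averaged integral and shows it is bounded away from $0$.

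Putting these together: for every unit $\phi$,
\[
\phi^T\Big(\frac{1}{|W_n|}\int_{W_n^2} h(u,v)\,\der u\,\der v\Big)\phi \;=\; \frac{1}{|W_n|}\int_{W_n}\!\Big(\int \phi^T h(u,v)\phi\,\der v\Big)\der u \;\ge\; \kappa
\]
for some $\kappa>0$ independent of $n$ and $\phi$, up to a negligible boundary correction of order $|\partial W_n\oplus R|/|W_n|=o(1)$ coming from restricting $v$ to $W_n$. Since $H_n(\theta^*)$ converges (in the sense of Lemma~\ref{lem_general}) to $H(\theta^*)=\lim |W_n|^{-1}\int_{W_n^2} h(u,v)\,\der u\,\der v$, this yields $\lambda_{\min}(H(\theta^*))\ge\kappa>0$, which is exactly the requirement \eqref{Ass:LimitMatrix}.

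I expect the main obstacle to be precisely the uniformity issue flagged above: converting the pointwise positivity of $|A(u,\phi)|$ and of the integral $\int\phi^T h(u,v)\phi\,\der v$ into a bound uniform in both $u$ and $\phi$, which is what the hypotheses on $\mu$, $\delta$ and the integrability of $\sup_u\|h(u,\cdot)\|$ are designed to make possible but which still requires a careful continuity/compactness argument and careful bookkeeping of the boundary terms under \eqref{Ass:ShapeWindow}. The verification that $h(u,v)$ has uniformly bounded support and that $\rho^{(2)}$ is uniformly bounded above and below (away from the diagonal, on the support of $h$) is routine given the boundedness of $K_{\ta^*}$ and \eqref{Ass:Kernel}, so I would not dwell on it.
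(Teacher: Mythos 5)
Your core argument is the same as the paper's: find $R$ with $h(u,v)=0$ for $\|u-v\|\ge R$ (from $w$ having support in $[-1,1]$, \eqref{Ass:Kernel} and boundedness of $K_{\theta^*}$), split $H_n(\theta^*)$ into an integral over $W_n^{\ominus R}\times W_n$ plus a boundary term controlled by $|\partial W_n\oplus R|/|W_n|\to 0$ and the integrability of $\sup_u\|h(u,\cdot)\|$, and then bound $\int\phi^Th(u,v)\phi\,\der v$ below on $A(u,\phi)$ using $w\ge\inf_{[0,1/\mu]}w>0$, $|\phi^T\nabla_\theta\rho^{(2)}|>\delta$ and $\rho^{(2)}\le\|\rho^{(2)}\|_\infty$. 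Two remarks. First, your appeal to Lemma~\ref{lem_general} to get convergence of $H_n(\theta^*)$ is not valid here: that lemma requires translation invariance of $f$ and of the joint intensities, which is exactly what is absent in the non-stationary setting, and the paper does not use it in this proof. Fortunately it is also unnecessary: \eqref{Ass:LimitMatrix} asks only for $\liminf_n\inf_{\|\phi\|=1}\phi^TH_n(\theta^*)\phi>0$, and your direct lower bound $\ge\kappa+o(1)$ delivers this without any convergence of $H_n$. You should simply delete the convergence step.

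Second, the uniformity issue you flag for $|A(u,\phi)|$ is real, but your proposed compactness fix does not work: $\phi$ ranges over a compact sphere, but $u$ ranges over $W_n^{\ominus R}$, an unbounded (growing) family, so continuity plus compactness cannot produce a lower bound on $|A(u,\phi)|$ uniform in $u$. The paper's own proof silently treats $|A|$ as a single constant in the final display $|W_n^{\ominus R}|\,|A|\,\delta^2\inf w/(|W_n|\|\rho^{(2)}\|_\infty)$, i.e.\ it reads the hypothesis as providing a uniform positive lower bound on the measure of $A(u,\phi)$; that is the reading you should adopt (or add as an explicit hypothesis), rather than trying to derive uniformity. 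With these two adjustments your proof coincides with the paper's.
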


\begin{proof}
By definition of $w$, \eqref{Ass:Kernel} and the fact that $K_{\theta^*}$ is bounded, there exists $R>0$ such that $h(u,v)=0$ when $\|v-u\|\geq R$.
 The integral in \eqref{Ass:LimitMatrix} writes
\[ H_n(\theta^*)=\frac{1}{|W_n|}\int_{W_n^2} h(u,v) \cara{\|u-v\| \le R}  \dd v \dd u =\frac{1}{|W_n|}\int_{W_n\ominus R}\int_{W_n}h(u,v) \cara{\|u-v\| \le R} \dd v \dd u + \epsilon_n  \]
where \[\epsilon_n = \frac{1}{|W_n|}\int_{W_n\setminus (W_n\ominus R)} \int_{W_n} h(u,v) \cara{\|u-v\| \le R} \dd v \dd u.\]
By \eqref{Ass:ShapeWindow}, we have
\begin{align*}
\|\epsilon_{n}\| &\leq \frac{|W_n\setminus W_n\ominus R)|}{|W_n|}\sup_{u\in\R^d} \int_{\R^d} \|h(u,v)\|\der v\\
& \le \frac{|\partial W_n \oplus R|}{|W_n|}\sup_{u\in\R^d} \int_{\R^d}\|h(u,v)\|\der v\to 0,
\end{align*} 
and for all $\phi$,
\[\phi^T\left(\int_{W_n\ominus R}\int_{W_n}h(u,v) \cara{\|u-v\| \le R} \dd u \dd v \right)\phi
 =  \int_{W_n\ominus R}\left(\int_{\|u-v\| \le R}\phi^T h(u,v)\phi \dd v \right)\dd u. \]
By our assumption on $\nabla_{\theta}\rho^{(2)}$, there exists a set $A$ of positive Lebesgue measure such that
$$\forall v\in A,~|\phi^T\nabla_{\theta}\rho^{(2)}(u,v;\ta^*)|>\delta~~\mbox{and}~~w \left ( \frac{\epsilon K_{\ta^*}(u,u)K_{\ta^*}(v,v)}{K_{\ta^*}(u,v)^2} \right )>\inf_{x\in[0,1/\mu]}w(x).$$
Hence for $\|\phi\|=1$,
\begin{align*}
 &\frac{1}{|W_n|} \phi^T\left(\int_{W_n\ominus R}\int_{W_n}h(u,v) \cara{\|u-v\| \le R} \dd u \dd v \right)\phi\\
 \geq &\frac{\inf_{x\in[0,1/\mu]}w(x)}{|W_n|\|\rho^{(2)}(.,.;\theta^*)\|_{\infty}} \int_{W_n\ominus R}\left(\int_A|\phi^T\nabla_{\theta}\rho^{(2)}(u,v;\ta^*)|^2 \dd v \right)\dd u \\
 \geq &\frac{|W_n\ominus R| |A| \delta^2\inf_{x\in[0,1/\mu]}w(x)}{|W_n|\|\rho^{(2)}(.,.;\theta^*)\|_{\infty}}\\
  = &\left(\frac{|W_n|-|W_n\cap(\partial W_n \oplus R) |}{|W_n|}\right)\frac{|A|\delta^2\inf_{x\in[0,1/\mu]}w(x)}{\|\rho^{(2)}(.,.;\theta^*)\|_{\infty}}\\
 \rightarrow &\frac{|A|\delta^2\inf_{x\in[0,1/\mu]}w(x)}{\|\rho^{(2)}(.,.;\theta^*)\|_{\infty}}> 0
\end{align*}
where the limit is a consequence of \eqref{Ass:ShapeWindow}. Since the limit does not depend on $\phi$, then \eqref{Ass:LimitMatrix} is satisfied.
\end{proof}

\subsection{Two-step estimation for a separable parameter}\label{sec:asympt_sep}

We consider a family of kernels
\[K_{\theta}(u,v)=\sqrt{\rho(u;\beta)}\C(u,v;\psi)\sqrt{\rho(v;\beta)},\]
where $ \theta:=(\beta^T,\psi^T)^T \in\Theta\subset\R^{p+q}$ with
$\beta\in\R^p$ and $\psi\in\R^q$, $\rho(.;\beta)$ are non-negative
functions, and $\C(\cdot,\cdot;\psi)$ are correlation functions, in particular
$\C(u,u;\psi)=1$ for any $\psi$. 
Note that in this case the DPP with kernel $K_\theta$ has intensity $\rho(.;\beta)$ and its pair correlation function is $g(u,v;\psi)=1-C^2(u,v;\psi)$.

As in the preceding section, we assume a DPP $X$ with kernel $K_{\theta^*}$, $\theta^*\in\mbox{Int}(\Theta)$, is
observed on a bounded window $W_n\subset\R^d$.  In the spirit of \cite{waagepetersen:guan:09}, we estimate $\theta^*$ in two steps. First, $\beta^*$ is estimated as the solution $\hat\beta_n$ of $s_n(\beta)=0$ where
\begin{equation*}
s_n(\beta)=\sum_{u \in \bfX \cap W_n}\frac{\nabla_\beta\rho(u;\beta)}{\rho(u;\beta)}-\int_{W_n}\nabla_\beta\rho(u;\beta)\der u
\end{equation*}
is the score function for a Poisson point process.  Then, we estimate $\psi^*$ by the solution  $\hat{\psi}_n$ of $u_n(\hat\beta_n,\psi)=0$ where
\[u_n(\theta)= \sum_{u,v \in \bfX \cap W_n}^{\neq} f(u,v;\ta) - \int_{W_n^2} f(u,v;\ta) \rho^{(2)}(u,v;\ta) \dd u \dd v\]
for a given $\R^q$-valued function $f$ and where
$\rho^{(2)}(u,v;\ta)=\rho^{(2)}(u,v;\beta,\psi)= \rho(u;\beta)\rho(v;\beta)(1-C^2(u,v;\psi))$
in this case.  Here and in the following, for convenience of notation, we identify $u_n(\beta,\psi)$ with $u_n(\theta)$ when $\theta=(\beta^T,\psi^T)^T$.

This two-step procedure is a particular estimating equation procedure, since $\hat\theta_n:=(\hat\beta_n^T,\hat\psi_n^T)^T$ is obtained as the solution of $e_n(\theta)=0$ where $e_n(\theta)=(s_n(\beta)^T,u_n(\beta,\psi)^T)^T$.  Thus, this is a particular case of the setting in Section \ref{sec:asympt_global} where $l=2$, $q_1=1$, $q_2=2$, $f_1=\nabla_\beta\rho(u;\beta)/\rho(u;\beta)$ and $f_2=f$.

We assume in the following theorem the same conditions on the DPP $X$ as in the previous section. Similarly, we assume that \eqref{Ass:reg} through \eqref{Ass:LimitMatrix} (or (\ref{Ass:LimitMatrix}')) are satisfied for $f_1$ and $f_2$.  In this particular case, the matrix $H_n$ involved in \eqref{Ass:LimitMatrix}
simply writes
\begin{equation*}
H_n(\beta,\psi)=\left (
\begin{array}{cc}
    H^{1,1}_n(\beta,\psi) & 0\\
    H^{2,1}_n(\beta,\psi) & H^{2,2}_n(\beta,\psi)\\
\end{array}
\right )
\end{equation*}
where
\begin{align*}
& H^{1,1}_n(\beta)=\frac{1}{|W_n|}\int_{W_n} \frac{\nabla_\beta\rho(u;\beta)\nabla_\beta\rho(u;\beta)^T}{\rho(u;\beta)} \der u,\\
 & H^{2,1}_n(\beta,\psi)=\frac{1}{|W_n|}\int_{W_n^{2}} f(u,v;\beta,\psi) \nabla_\beta\rho^{(2)}(u,v;\beta,\psi)^T\der u\der v,\\
 & H^{2,2}_n(\beta,\psi)=\frac{1}{|W_n|}\int_{W_n^{2}} f(u,v;\beta,\psi) \nabla_\psi\rho^{(2)}(u,v;\beta,\psi)^T \der u\der v.
\end{align*}
Since it is a non symmetric matrix, condition (\ref{Ass:LimitMatrix}') is more  applicable than \eqref{Ass:LimitMatrix}. Mild conditions ensuring  (\ref{Ass:LimitMatrix}')  in the stationary case are provided in Lemma~\ref{lem_separable}.

\begin{theo} \label{th:sep}
Under Assumptions \eqref{Ass:regDPP} and \eqref{Ass:Kernel}, if
assumptions \eqref{Ass:reg} through \eqref{Ass:LimitMatrix} (or
{\em(\ref{Ass:LimitMatrix}')}) are satisfied for
$f_1=\nabla_\beta\rho(u;\beta)/\rho(u;\beta)$ and $f_2=f$, then
  with a probability tending to one as $n\rightarrow\infty$, there exists a $|W_n|^{1/2}$-consistent sequence of roots $\hat
\ta_n$ of the estimating equations $e_n(\theta)=0$. If moreover \eqref{Ass:ShapeWindow} and \eqref{Ass:VarControl} hold true, then 
\[|W_n|\Sigma_n^{-1/2} H_n(\theta^*)(\hat \theta_n - \theta^*)\cvlaw\mathcal{N}(0,I_{p+q}).\]
\end{theo}
\begin{proof}
The proof follows the same lines as the proof of Theorem \ref{th:generalDPP}.
\end{proof}

The next lemma is similar to Lemma~\ref{F3stat}. When $q=1$ the last technical condition boils down to  $\nabla_\psi (1-C^2(0,t;\psi^*))\neq 0$ for some $t$ such that $C(0,t;\psi^*)\geq\sqrt{\epsilon}C(0,0;\psi^*)$. In particular, the stationary kernels in Section~\ref{sec:simu} satisfy the required assumptions, see the supplementary material.

\begin{lem}\label{lem_separable}
Assume that for all $\theta$, $K_\theta(u,v)$ only depends on $u-v$, in which case $\rho(u;\beta)=\beta$ with $\beta>0$ and  $C(u,v;\psi)=C(0,v-u;\psi)$ with $\psi\in\R^q$.
Then the output of the first step is $\hat\beta_n = N(X\cap W_n)/|W_n|$. In the second step, assume
\begin{align*}
f(u,v;\beta,\psi)=& w \left (\frac{\epsilon}{1-g(u,v;\psi)} \right ) \frac{\nabla_\psi \rho^{(2)}(u,v;\beta,\psi)}{\rho^{(2)}(u,v;\beta,\psi)} \\
=&  w \left (\frac{\epsilon}{C(0,v-u;\psi)^2} \right )  \frac{\nabla_\psi (1-C^2(0,v-u;\psi))}{1-C^2(0,v-u;\psi)}.
\end{align*}
Assume that $w$ is positive on $[0,1)$, vanishes on $[1,\infty)$ and is differentiable on $\R_+$. If $t\mapsto f(0,t;\theta^*)$ is integrable and ${\rm span}\{\nabla_\psi (1-C^2(0,t;\psi^*)) \, : \, C(0,t;\psi^*)>\sqrt{\epsilon}\}=\R^q$, then {\em(\ref{Ass:LimitMatrix}')} is satisfied under \eqref{Ass:ShapeWindow}, \eqref{Ass:regDPP} and \eqref{Ass:Kernel}.
\end{lem}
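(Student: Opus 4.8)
The plan is to handle the two estimation steps separately and then, as in the proof of Lemma~\ref{F3stat}, reduce {\em(\ref{Ass:LimitMatrix}')} to the invertibility of the limit of $H_n(\theta^*)$.

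The first step is immediate: since $\rho(u;\beta)=\beta$ we have $\nabla_\beta\rho(u;\beta)=1$, so $s_n(\beta)=N(X\cap W_n)/\beta-|W_n|$, whose unique root is $\hat\beta_n=N(X\cap W_n)/|W_n|$. For the joint estimating function $e_n(\theta)=(s_n(\beta)^T,u_n(\beta,\psi)^T)^T$ the matrix $H_n$ has the block lower triangular form recorded before Theorem~\ref{th:sep}, hence converges to an invertible matrix as soon as its diagonal blocks do. Here $H^{1,1}_n(\beta)=\frac{1}{|W_n|}\int_{W_n}\beta^{-1}\der u=1/\beta$ is the constant invertible $1\times1$ matrix $1/\beta^*$, so only the $\psi$-block $H^{2,2}_n(\theta^*)$ needs attention.

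For that block I would substitute $\rho^{(2)}(u,v;\beta,\psi)=\beta^2\big(1-C^2(0,v-u;\psi)\big)$, which has $\nabla_\psi\rho^{(2)}(u,v;\beta,\psi)=\beta^2\nabla_\psi(1-C^2(0,v-u;\psi))$, so that the $\beta^2$ factors cancel between $f$ and $\rho^{(2)}$ and the integrand of $H^{2,2}_n(\theta^*)$ depends on $(u,v)$ only through $t=v-u$, being equal to $\beta^{*2}h(t)$ for the $h$ in the statement. By \eqref{Ass:Kernel}, $C(0,t;\psi^*)\to0$ as $\|t\|\to\infty$, so the weight factor $w(\cdot)$ in $f$ vanishes for $\|t\|$ large and $h$ has bounded support; since $h$ is moreover integrable (its only potential singularity, at the origin, being assumed integrable) and \eqref{Ass:ShapeWindow} holds, Lemma~\ref{lem_general} yields $H^{2,2}_n(\theta^*)\to\beta^{*2}\int_{\R^d}h(t)\,\der t=:H^{2,2}(\theta^*)$; the same change of variables shows $H^{2,1}_n(\theta^*)$ converges to a finite limit, which is irrelevant to the invertibility of a triangular matrix.

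Finally I would prove $H^{2,2}(\theta^*)$ invertible exactly as in Lemma~\ref{F3stat}. As $w\geq0$ and $1-C^2(0,t;\psi^*)\geq0$, each $h(t)$ is positive semi-definite of rank at most one, so $H^{2,2}(\theta^*)$ is positive semi-definite and it suffices to show $\phi^TH^{2,2}(\theta^*)\phi=0$ forces $\phi=0$. Letting $A$ be the set of $t$ on which the weight in $f$ is strictly positive, i.e.\ $|C(0,t;\psi^*)|>\sqrt\epsilon$ (using $C(0,0;\psi^*)=1$ and positivity of $w$ on $[0,1[$), continuity and positive semi-definiteness of $t\mapsto h(t)$ (provided by \eqref{Ass:regDPP} and continuity of $C$) give
\[\phi^TH^{2,2}(\theta^*)\phi=0\ \Longleftrightarrow\ \nabla_\psi(1-C^2(0,t;\psi^*))^T\phi=0\ \ \forall t\in A\ \Longleftrightarrow\ \phi\in\big({\rm span}\{\nabla_\psi(1-C^2(0,t;\psi^*)):t\in A\}\big)^{\perp},\]
and the span hypothesis then forces $\phi=0$. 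Hence $H_n(\theta^*)$ converges to an invertible matrix and {\em(\ref{Ass:LimitMatrix}')} holds. The one genuinely delicate point is the same as in Lemma~\ref{F3stat}: passing from the a.e.\ vanishing of $\phi^T\nabla_\psi(1-C^2(0,\cdot;\psi^*))$ on $A$ to its vanishing on the full spanning family appearing in the hypothesis, which is handled by continuity of $t\mapsto\nabla_\psi C^2(0,t;\psi^*)$; everything else is a routine specialization of the stationary computation behind Lemma~\ref{F3stat} together with the block structure of $H_n$.
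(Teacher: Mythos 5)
Your proposal is correct and follows essentially the same route as the paper: compute $\hat\beta_n$ directly, use the translation invariance together with Lemma~\ref{lem_general} to obtain the limits $H^{1,1}_n\to 1/\beta^*$ and $H^{2,2}_n\to\beta^{*2}\int h(t)\,\der t$, exploit the block-triangular structure to reduce {\em(\ref{Ass:LimitMatrix}')} to invertibility of the $\psi$-block, and conclude by the span/positive-semi-definiteness argument of Lemma~\ref{F3stat}. (The paper's proof contains an evident typo, referring to $H_n^{2,1}$ where $H_n^{2,2}$ is meant; your version gets this right.)
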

\begin{proof}
By definition of $w$ and \eqref{Ass:Kernel}, there exists $R>0$ such that $f(0,t;\theta^*)=0$ when $\|t\|\geq R$. Since $K_\theta$ and $f$ are invariant by translation and $t\mapsto f(0,t;\theta^*)$ is integrable then $H_n(\theta)$ converges by Lemma~\ref{lem_general}. In particular, we have
\[H_n^{1,1}(\beta)\rightarrow\frac 1 {\beta},\]
\[H_n^{2,2}(\beta,\psi)\rightarrow{\beta}^2 \int_{\|t\|\leq R} h(t;\psi) \dd t,\]
\[H_n^{2,1}(\beta,\psi)\rightarrow 2\beta \int_{\|t\|\leq R} w \left (\frac{\epsilon}{C(0,t;\psi)^2} \right ) \nabla_\psi (1-C^2(0,t;\psi))\dd t,\]
where the function $h(.;\psi):\R^d\to \R^{p\times p}$ is defined by
\[h(t;\psi)=    w \left (\frac{\epsilon}{C(0,t;\psi)^2} \right )  \frac{\nabla_\psi (1-C^2(0,t;\psi))\nabla_\psi (1-C^2(0,t;\psi))^T}{1-C^2(0,t;\psi)}.\]
The limit of $H_n(\theta)$ is continuous by \eqref{Ass:regDPP}.  In this case, proving (\ref{Ass:LimitMatrix}') is equivalent to showing that the limit of $H_n(\theta^*)$ is invertible. Since this matrix is block triangular and $\beta>0$ then it is invertible if and only if the limit of $H_n^{2,1}(\theta^*)$ is invertible. This is done the same way as in Lemma \ref{F3stat}. 
\end{proof}

\section{Simulation study}\label{sec:simu}


In this section we use simulation studies to investigate the performance of our adaptive estimating function. In Section~\ref{supp-sec:twostepvssimul} of the supplementary material, we additionally  compare two-step estimation, when it is feasible, with simultaneous estimation. Our recommendation is to use the two-step approach.


In order to assess the adaptive test function \eqref{eq:f_adapt} against the truncated test function  \eqref{eq:CL_R} with a prescribed $R$, we consider a DPP model in $\R^2$ with a Bessel-type kernel
\[K(u,v) = \sqrt{\rho(u)\rho(v)} \,\frac{J_1(2\|u-v\|/\alpha)}{\|u-v\|/\alpha} ,\]
where $J_1$ denotes the Bessel function of the first kind, $\rho$ is the intensity and $\alpha$ controls the range of interaction of the DPP. For existence, $\rho$ and $\alpha$ must satisfy
\begin{equation}\label{existenceBessel} \alpha^2 \|\rho\|_{\infty}\leq \frac 1 \pi.\end{equation}
This relation shows the tradeoff between the expected number of points and the strength of repulsiveness that we can obtain. This model is a particular instance of the Bessel-type DPP introduced in \cite{Biscio16}. It covers a large range of repulsiveness, from the Poisson point process (when $\alpha$ is close to 0) to the most repulsive DPP (when $\alpha=1/\sqrt{\pi \|\rho\|_{\infty}}$).

For this model, we consider three constant values of $\rho$, $\rho\in\{50,100,1000\}$, corresponding to homogeneous DPPs,  and an inhomogeneous situation where $\rho(u)=\rho(x,y)=20\exp(4x)$ when $u\in [0,1]^2$. The latter case corresponds to a log-linear intensity function involving two parameters. For each $\rho$, three values of $\alpha$ are considered: a small one, a medium one, and a last one  close to the maximal possible value satisfying \eqref{existenceBessel}. Examples of point patterns simulated on  $[0,1]^2$ are displayed in Figure~\ref{fig:samples}. All simulations are carried out using \texttt{R} \citep{R}, in particular the library \texttt{spatstat} \citep{baddeley:rubak:turner:15}.

We estimate $\rho$ and $\alpha$ by a two-step procedure as studied in Section~\ref{sec:asympt_sep} from  realizations of the DPP on $W=[0,1]^2$. The alternative global approach of Section~\ref{sec:asymptDPP_global} is discussed in the next section. In the first step,  the parameters arising in $\rho$ are estimated by the score function for a Poisson point process. This gives $\hat\rho = N(X\cap W)/|W|$ in the homogeneous cases. In the second step, we consider the estimating equation based on  \eqref{eq:CL_R} where $\theta$ is $\alpha$ in this setting and  when $R\in\{0.05, 0.1,0.25\}$, and based on  the adaptive test function \eqref{eq:f_adapt} with $\epsilon=0.01$ and the weight function $w$ given at the end of Section~\ref{sec:adaptiveR}. This yields four different estimators of $\alpha$. 
The root mean square errors (RMSEs) of these estimators and the mean computation time estimated from 1000 replications are summarised in Table~\ref{tab:bessel}. Boxplots are displayed in Figure~\ref{supp-fig:bessel} in the supplementary material. Note that the codes have not been optimised, but the same computational strategy has been used for all methods, making the comparison of the mean computation time meaningful. 

The Bessel-type  kernel and the aforementioned test functions used in the two-step estimation procedure fulfill the assumptions of Theorem~\ref{th:sep} and Lemma~\ref{lem_separable} (for the homogeneous case), ensuring nice asymptotic properties of the estimators considered in this section. This is confirmed by the estimated RMSE's  reported in Table~\ref{tab:bessel}, that decrease when the intensity $\rho$ increases \cite[which  mimics the effect of an increasing window since rescaling the window by a factor $1/k$ is equivalent to change $\rho$ into $k^2\rho$ and $\alpha$ into $\alpha/k$, see (2.4) in][]{Lavancier15}. Moreover, these RMSE's show that the best choice of $R$ in the test function~\eqref{eq:CL_R} clearly depends on the range of interaction of the underlying process. This emphasizes the importance of a data-driven approach to choosing $R$ since the range is unknown in practice. Fortunately, the performance of the adaptive method is, except for the case $\rho=100,\alpha=0.01$, always better than the worst choice of $R$ and very close to the best $R$. For the exceptional case, the small differences in performance can be explained by Monte Carlo error. Further, use of the adaptive method implies only little or no extra computional effort. In presence of many points, the adaptive version is in fact much faster to compute than the estimator based  on \eqref{eq:CL_R} with the choice of a too large $R$, see for instance the results for $\rho=1000$ and $R=0.25$. 

Table~\ref{supp-tab:bessel2} in the supplementary material
  shows the root mean square errors of the adaptive estimator using $\epsilon=0.05$. The RMSEs obtained with $\epsilon=0.05$ are bigger than those obtained with $\epsilon=0.01$. Nevertheless, the adaptive method with $\epsilon=0.05$ still performs well in the sense that it usually performs better than the worst $R$ and usually almost as good as the best $R$.
Because the above estimation methods sometimes fail to converge, we also report in Table~\ref{supp-tab:percentcv} in the supplementary material  the percentages of times each method has converged in our simulation study. These percentages are similar for all methods. Note that the results in Table~\ref{tab:bessel} and in Figure~\ref{supp-fig:bessel} are based on 1000 simulations where all four methods have converged.


\begin{figure}[ht]
\begin{center}
{\small
\begin{tabular}{ccc}
 \includegraphics[scale=.3]{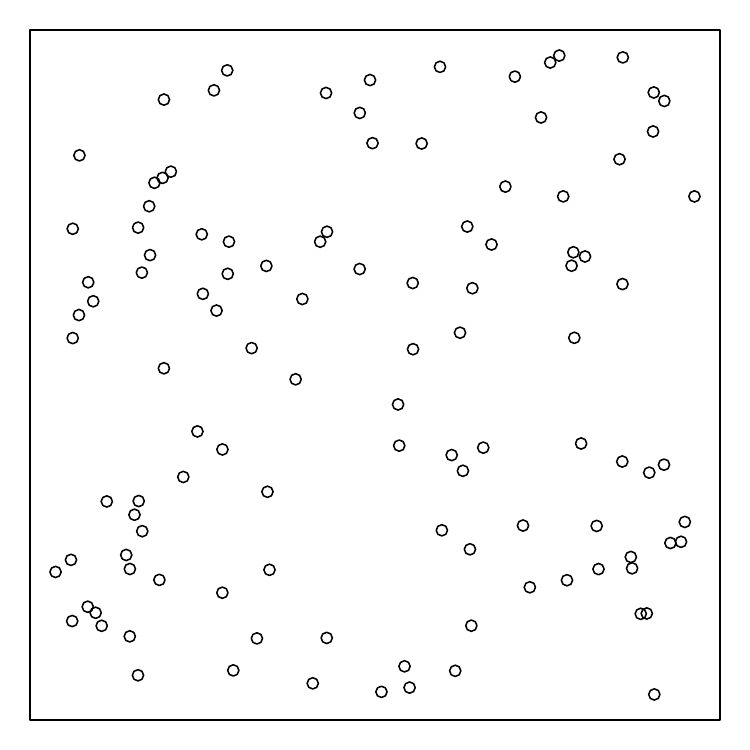} & \includegraphics[scale=.3]{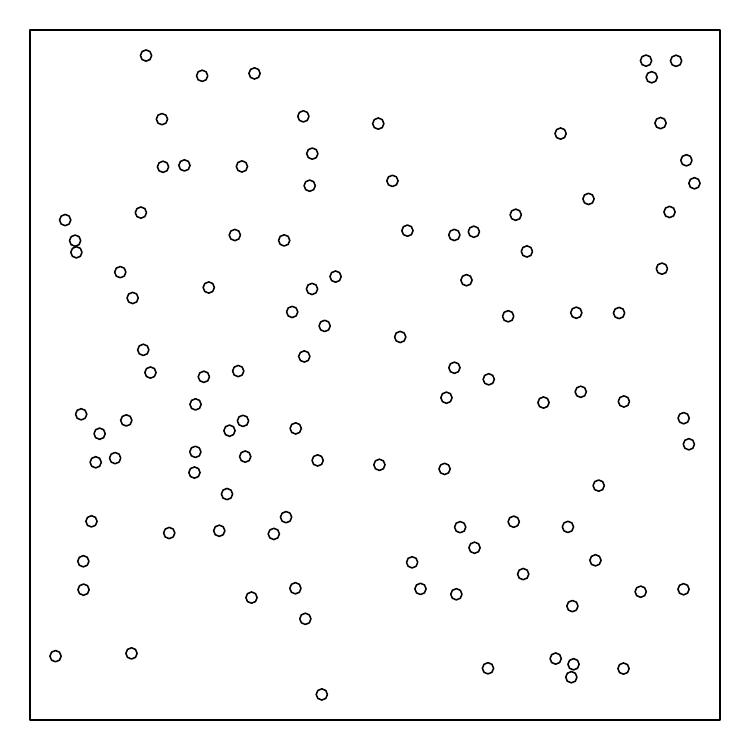} &  \includegraphics[scale=.3]{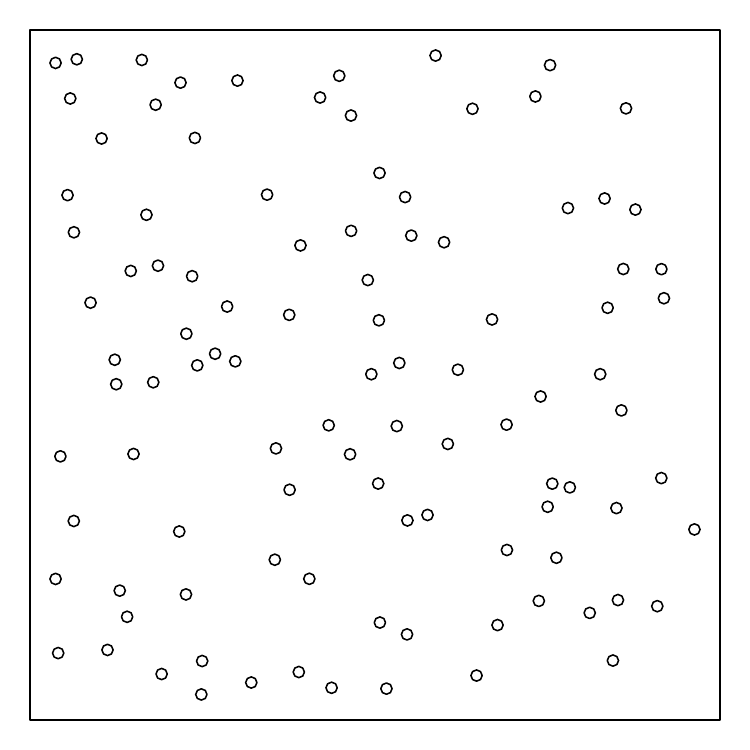}  \\
$\rho=100$, $\alpha=0.01$  & $\rho=100$, $\alpha=0.03$ & $\rho=100$, $\alpha=0.05$  \\
\includegraphics[scale=.3]{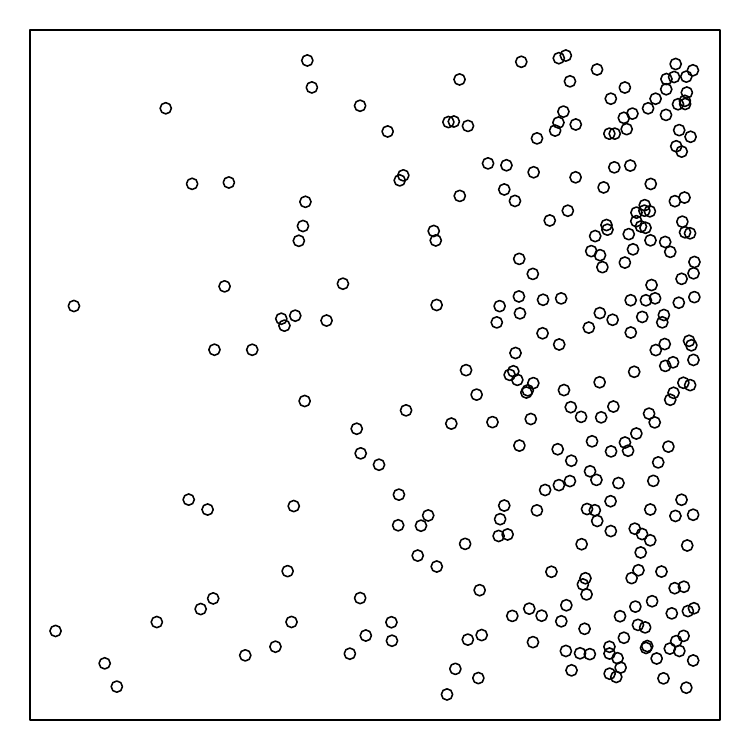} &  \includegraphics[scale=.3]{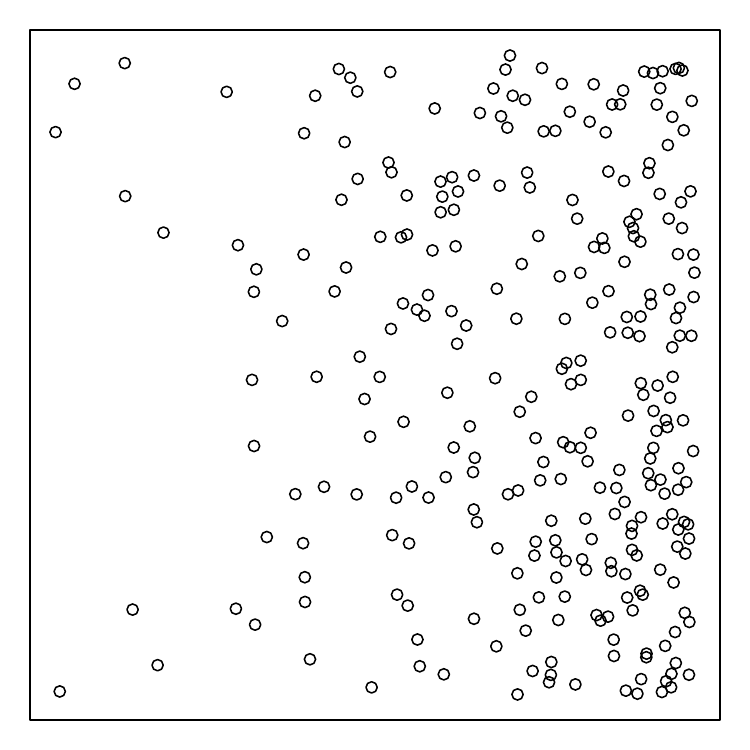} &  \includegraphics[scale=.3]{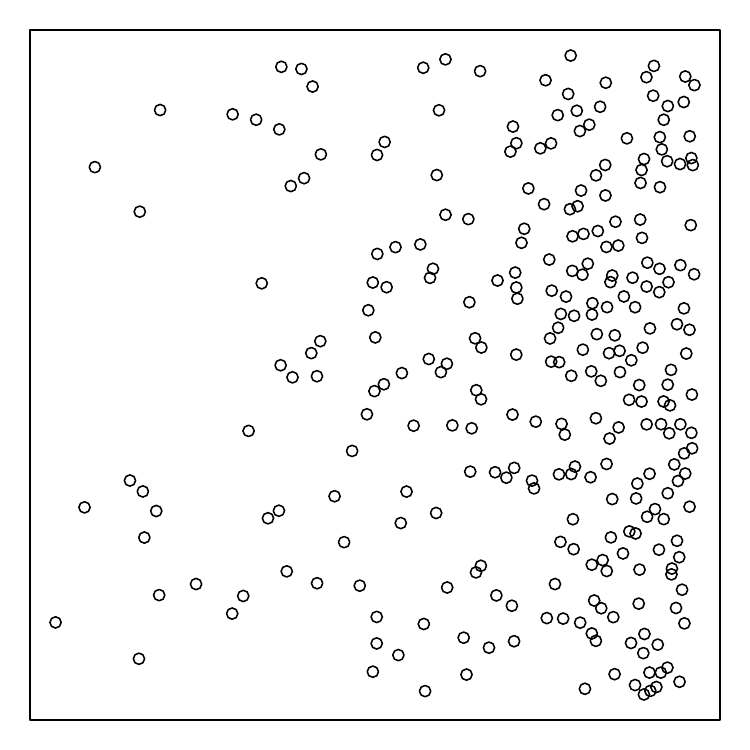} \\
$\rho=\rho(u)$, $\alpha=0.005$ & $\rho=\rho(u)$, $\alpha=0.01$ & $\rho=\rho(u)$, $\alpha=0.015$\\
\end{tabular}}
\caption{{\small Examples of point patterns simulated from a Bessel-type DPP on $[0,1]^2$ for different values of $\rho$ and $\alpha$. For the last row, $\rho(x,y)= 20\exp(4x)$.}}
\label{fig:samples} 
\end{center}
\end{figure}

\begin{table}
\centering
\resizebox*{!}{0.95\textwidth} {
\begin{tabular}{|rrcrrrr|r|}
 \hline
$\rho$ & $\alpha$ & &$R=0.05$ & $R=0.1$ & $R=0.25$ & Adaptive & $\hat R$ \\  \hline
50 & 0.02 & \sc{rmse:} & 5.84 (0.15)& 5.83 (0.17)& 6.29 (0.19)& 5.97 (0.18)& 0.047 \\
  && \sc{time:} & 0.43 & 0.48 & 0.68 & 0.64 & (0.020)\\  \vspace{-0.3cm}&&&&&&&\\ 
  & 0.04  & \sc{rmse:} & 15.60 (0.44)& 9.18 (0.20) & 9.19 (0.22)& 9.25 (0.21)& 0.106 \\ 
   && \sc{time:} &  0.48 & 0.50 & 0.68 & 0.73 &(0.037)\\
  \vspace{-0.3cm}&&&&&&&\\
& 0.07  & \sc{rmse:}  & 13.32 (0.33)&  8.25 (0.23)&  8.22 (0.24)&  8.15 (0.24)& 0.147 )\\
  && \sc{time:} &  0.50 & 0.45 & 0.59 & 0.72 &(0.050\\ \vspace{-0.3cm}&&&&&&&\\\hline
100 & 0.01   & \sc{rmse:} &  2.44 (0.08)& 2.45 (0.08)& 2.58 (0.09)& 2.63 (0.09) & 0.024 \\ 
 && \sc{time:} &  0.44 & 0.57 & 1.22 & 0.70 &(0.009)\\  \vspace{-0.3cm}&&&&&&& \\ 
 & 0.03  & \sc{rmse:}  & 5.34 (0.13)& 5.12 (0.13)& 5.28 (0.14)& 5.27 (0.13)& 0.064 \\  
 && \sc{time:} & 0.40 & 0.47 & 0.98 & 0.70  &(0.019)\\  \vspace{-0.3cm}&&&&&&&\\ 
 & 0.05  & \sc{rmse:}  & 5.78 (0.12)& 4.43 (0.12)& 4.50 (0.10)& 4.53 (0.12)& 0.139  )\\  
 && \sc{time:} &  0.52 & 0.56 & 1.16 & 0.95&(0.022 \\ \vspace{-0.3cm}&&&&&&&\\\hline
1000 & 0.005  & \sc{rmse:} &  0.67 (0.02)& 0.88 (0.02) & 0.83 (0.02)& 0.72 (0.02)& 0.015 \\ 
 && \sc{time:} &  3.83 & 19.04 & 110.07 & 9.38 & (0.003)\\ \vspace{-0.3cm}&&&&&&& \\
& 0.01   & \sc{rmse:} & 0.57 (0.01)& 0.59 (0.02)& 0.61 (0.01)& 0.56 (0.01)& 0.028 \\ 
 && \sc{time:} & 2.68 & 10.40 & 60.79 & 6.84 &(0.005)\\  \vspace{-0.3cm}&&&&&&&\\ 
 & 0.015 & \sc{rmse:}  &  0.47 (0.01)& 0.46 (0.01)& 0.52 (0.01)& 0.47 (0.01)& 0.026\\  
 && \sc{time:} & 2.53 & 9.81 & 55.78 & 7.75&  (0.002)\\  \vspace{-0.3cm}&&&&&&&\\\hline
Inhom & 0.005  & \sc{rmse:} &   1.58 (0.04)& 1.65 (0.04)& 1.66 (0.04)& 1.61 (0.04)& 0.014 \\ 
 && \sc{time:} &  0.89 &  2.50 & 10.30 & 1.19  & (0.005)\\ \vspace{-0.3cm}&&&&&&& \\
& 0.01   & \sc{rmse:} &  1.34 (0.03)& 1.36 (0.03)& 1.36 (0.03)& 1.32  (0.03)& 0.025 \\ 
 && \sc{time:} & 0.76 & 1.86 & 7.66 & 1.22 &(0.008)\\  \vspace{-0.3cm}&&&&&&&\\ 
 & 0.015 & \sc{rmse:}  &  1.43 (0.03)&1.47 (0.03)&1.48 (0.03)&1.40  (0.03)& 0.030 \\  
 && \sc{time:} &  0.86 &1.90& 7.46& 1.40 & (0.006)\\
  \hline
\end{tabular}}
\caption{{\small Estimated root mean square errors ($\times 10^3$) and mean computation time (in seconds) of $\hat\alpha$ for a Bessel-type DPP on $[0,1]^2$, for different values of $\rho$ and $\alpha$. The 3 first estimators use the test function \eqref{eq:CL_R} with $R=0.05$, $R=0.1$ and $R=0.25$ respectively, while the last estimator is the adaptive version based on \eqref{eq:f_adapt}. The standard errors of the RMSE estimations are given in parenthesis. The last column gives the averages of "practical ranges" (i.e. maximal solution to $|g(r)-1|=0.01$) used for the adaptive estimator, along with their standard deviations in parenthesis. For each value of $\rho$ and $\alpha$, these quantities are computed from 1000 simulations where all four estimation methods have converged.} } \label{tab:bessel} \end{table}


\section{Application}\label{sec:application}

To illustrate the practical importance of our adaptive estimating function and our asymptotic results we consider the problem of fitting a DPP model to the point pattern data in the left plot of Figure~\ref{fig:dataexample}. This dataset   collected by  \cite{numata64} records the locations of 204 seedlings and saplings of Japanese black pines in an observation window of dimension  10m by 10m. It has previously been analysed in \cite{ogata:tanemura:86} using an inhomogeneous Gibbs model and later in \cite{Lavancier15} using an inhomogeneous DPP with kernel of the form \eqref{dppSOIRS} with a cubic polynomial in the Cartesian coordinates for the log-intensity and $C(u)=\exp(-\|u\|^2/\alpha^2$). The estimation in \cite{Lavancier15} was carried out using a two-step procedure where  the intensity parameters were fitted in the first step by the Poisson likelihood method and in the second step $\alpha$ was estimated by minimisation of a contrast function based on the pair correlation function. This gave $\hat\alpha = 0.226$ and the fit was judged to be satisfying based on several goodness of fit envelope tests. However this second step relies on the arbitrary choice of several tuning parameters similar to $R$ and no confidence intervals were provided. We also fit the same inhomogeneous DPP model by the two-step approach (detailed in Section~\ref{sec:asympt_sep}) but using in the second step the test function \eqref{eq:CL_R} (for the non-adaptive approach) or \eqref{eq:f_adapt} (for our adaptive version).

\begin{figure}
\begin{center}
\includegraphics[height=6.3cm]{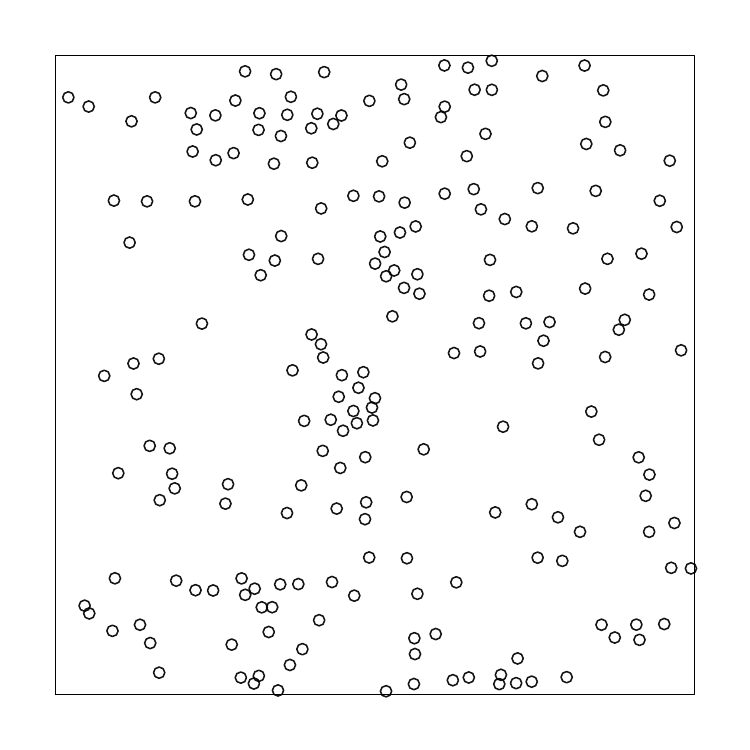} 
\includegraphics[height=6cm]{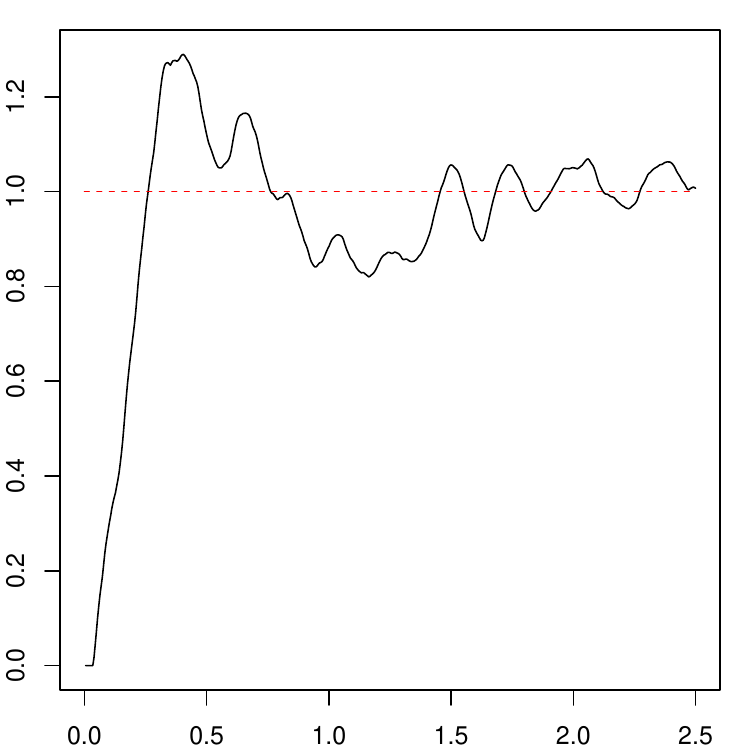} 
\caption{{\small Japanese pines dataset and  kernel estimate of its pair correlation function.}}
\label{fig:dataexample} 
\end{center}
\end{figure}

 For the non-adaptive approach, the default choice of $R$ provided by \texttt{spatstat} \citep{baddeley:rubak:turner:15} is one quarter of the smallest side length of the observation window, which in the current case gives $R=2.5$.
This choice of $R$ is subject to criticism since it does not at all take into account the correlation properties of the data generating point process. Alternatively one could, following the references mentioned in Section~\ref{sec:adaptiveR}, choose an $R$ based on inspection of the kernel estimate of the pair correlation function shown in the right plot in Figure~\ref{fig:dataexample}. This suggest using a value of $R$ around 0.4. However, this last approach completely eludes a theoretical underpinning. First, the asymptotic properties of the kernel estimator itself are complicated and second it is not possible to handle mathematically the visual assessment of $R$. 
We use instead our adaptive procedure and obtain $\hat\alpha=0.241$ for the range parameter and the adaptively chosen value of $\hat R=0.37$, which is in agreement with the visual inspection of the pair correlation function. 

Based on our Theorem~\ref{th:sep} in Section~\ref{sec:asympt_sep} we further in a standard way obtain a 95\% approximate confidence interval for $\alpha$ (estimate $\pm$ 1.96 times asymptotic standard deviation). For ease of implementation we use a parametric bootstrap to estimate the asymptotic standard deviation (alternatively one could use numerical integration to compute the asymptotic covariance matrix). More precisely, we generate $1000$ realisations of the fitted model and refit the model for each simulation. The empirical standard deviation of the resulting simulated estimates is then an estimate of the standard deviation of $\hat \alpha$. Note that our consistency result in Theorem~\ref{th:sep} is a requirement for the validity of this approach (see for instance \cite{Beran1997}). We obtain the specific estimate $0.026$ and hence the approximate 95\% confidence interval $[0.19;0.29]$ for $\alpha$. This result confirms that there is significant inhibition between the Japanese pines and in particular provides strong evidence against the inhomogeneous Poisson process model ($\alpha=0$). Note that a similar bootstrap approach is not possible in the non-adaptive case where $R$ is chosen by visual inspection, since the variability of this choice can not be included in an automatic procedure.

\section{Discussion}

In this paper we provide a very general asymptotic framework for estimating function inference for spatial point processses with known joint intensities. Specific asymptotic results are obtained for determinantal point processes.

 The performance of second order estimating functions depends strongly on a tuning parameter $R$ that controls which pairs of points are used in the estimation. Although not statistically optimal, our adaptive procedure for selecting this tuning parameter is intuitively appealing and easy to implement. The method depends on a new tuning parameter $\epsilon$ for which it is easier to identify reasonable values than for the original tuning parameter $R$. The resulting estimation procedure is computationally tractable and performs well in terms of mean squared error in the simulation studies considered. It moreover seamlessly integrates with the asymptotic results where the use of the adaptive method poses no extra theoretical difficulties. 

Though we focus in this paper on determinantal point processes, the adaptive method is applicable for any spatial point process with known pair correlation function. As an example we provide in Section~\ref{supp-sec:thomas} of the supplementary material a simulation study in case of a cluster process. 
 
\section*{Acknowledgements}
Rasmus Waagepetersen was supported by The Danish Council for Independent Research | Natural Sciences, grant DFF - 7014-00074 "Statistics for point processes in space and beyond", and by the "Centre for Stochastic Geometry and
Advanced Bioimaging", funded by grant 8721 from the Villum Foundation.

\bibliographystyle{royal}

\bibliography{masterbib}

\vspace{0.4cm}

\noindent Fr\'ed\'eric  Lavancier, Jean Leray Mathematics Institute, University of Nantes, 2 rue de la Houssini\`ere, 44322 Nantes Cedex 3,  France.\\
E-mail:~frederic.lavancier@univ-nantes.fr

\appendix

\section{Appendix}\label{sec:proof_general}


Our general Theorem~\ref{th:general} depends on a number of  assumptions. The setting is the same as in Section~\ref{sec:asympt_global}. We moreover define $\mbox{diam}(x)$ as the largest distance between two coordinates of $x$. 
The assumptions \eqref{Ass:reg} through \eqref{Ass:LimitMatrix} are mainly related to the test functions $f_i$, while for $X$ we assume \eqref{Ass:intenseBound} through \eqref{Ass:CLT}.

\begin{enumerate}[label=(F\arabic*),ref=F\arabic*]

\item For all $i=1,\dots,l$ and for all $x\in(\R^d)^{q_i}$, $\theta\mapsto f_i(x;\theta)$ is twice continuously differentiable in a neighbourhood of $\theta^*$. Moreover, the first and second derivative of $f_i$ with respect to $\theta$ are bounded with respect to $x \in (\R^d)^{q_i}$ uniformly in $\theta$ belonging to this neighbourhood.
\label{Ass:reg}

\item There exists a constant $R>0$ such that for all $\theta$ in a neighbourhood of $\theta^*$, all functions $x\mapsto f_i(x;\theta)$ vanish when $\mbox{diam}(x)>R$. \label{Ass:fbound}
\end{enumerate}
Define the matrices $H_n(\theta)$ by 
\begin{equation*}H_n(\theta)=\left (
\begin{array}{c}
    H_n^1(\theta) \\
    \vdots \\
    H_n^l(\theta) \\
\end{array}
\right ),\end{equation*}
where for all $i$ 
$$H_n^i(\theta) :=\frac{1}{|W_n|}\int_{W_n^{q_i}} f_i(x;\ta) \nabla_\theta\rho^{(q_i)}(x;\ta)^T \der x.$$ 

\begin{enumerate}[label=(F\arabic*),ref=F\arabic*]
\addtocounter{enumi}{2}
\item   The matrices $H_n(\theta^*)$ satisfy $$\liminf_{n\rightarrow\infty}\left(\inf_{\|\phi\|=1}\phi^T H_n(\theta^*)\phi\right)>0.$$
\label{Ass:LimitMatrix}

\item[(F3')] There exists a neighbourhood of $\theta^*$ such that for all $n$ high enough and all $\theta$ in this neighbourhood, $H_n(\theta)$ is invertible and $\|H_n(\theta)^{-1}\|$ is uniformly bounded with respect to $n$ and $\theta$, where $\| \cdot \|$ stands for any matrix norm.

\end{enumerate}

\begin{enumerate}[label=(X\arabic*),ref=X\arabic*]
\item For all $\theta$ in a neighbourhood of $\theta^*$ and all $q_i$, $i=1,\dots,l$, the intensity functions $x\mapsto \rho^{(q_i)}(x;\theta)$ are well-defined and bounded.
Moreover, $\theta\mapsto \rho^{(q_i)}(x;\theta)$ is twice continuously differentiable in a neighbourhood of $\theta^*$, for all $x\in(\R^d)^{q_i}$. Finally, the first and second derivative of $\rho^{(q_i)}$ with respect to $\theta$ are bounded with respect to $x\in(\R^d)^{q_i}$ uniformly in $\theta$  belonging to this neighbourhood.
\label{Ass:intenseBound}

\item For all $q_i$, $i=1,\dots,l$, the intensity functions $\rho^{(q_i)}(\cdot;\ta^*),\cdots,\rho^{(2q_i)}(\cdot;\ta^*)$ of $X$ are well-defined. Moreover, the intensity functions $\rho^{(q_i)}(\cdot;\ta^*),\cdots,$ $\rho^{(2q_i-1)}(\cdot;\ta^*)$ are bounded  and for all bounded sets $W\subset\R^d$ there exists a constant $C_0>0$, so that $ \int_W \varphi_i(x_1) \der x_1 < C_0$, $i=1,\ldots,l$ where $\varphi_i$ is the function
\begin{multline*}
\varphi_i :x_1\mapsto \sup_{\diam(x)<R}\sup_{\diam(y)<R}\sup_{y_1\in W} \rho^{(2q_i)}(x_1,x_2,\cdots,x_{q_i},y_1,\cdots,y_{q_i};\theta^*)\\
-\rho^{(q_i)}(x_1,x_2,\cdots,x_{q_i};\theta^*)\rho^{(q_i)}(y_1,\cdots,y_{q_i};\theta^*)
\end{multline*}
with $R$ coming from \eqref{Ass:fbound}.
\label{Ass:intense}

\item $X$ satisfies the central limit theorem
\begin{equation*} \Sigma^{-1/2}_n  e_n(\theta^*) \cvlaw\mathcal{N}(0, I_p),\end{equation*}
where $e_n$ is defined in Section~\ref{sec:asympt_global} and $\Sigma_n=\var(e_n(\theta^*))$.
\label{Ass:CLT}
\end{enumerate}

Assumptions \eqref{Ass:reg} and \eqref{Ass:fbound} are basic regularity conditions on the $f_i$'s. Similarly \eqref{Ass:intenseBound} and \eqref{Ass:intense} ensure that the intensity functions of $X$ exist and are sufficiently regular. 
The technical assumptions are in fact  \eqref{Ass:LimitMatrix}  (or  (\ref{Ass:LimitMatrix}')) and \eqref{Ass:CLT}.  While the latter strongly depends on the underlying point process (see \cite{waagepetersen:guan:09} for Cox processes and \cite{poinas17} for DPPs), the former 
can be simplified in some cases. For example, if $H_n(\theta^*)$ are
symmetrical matrices for all $n$ then \eqref{Ass:LimitMatrix} writes
$\liminf_n \lambda_{\min}(H_n(\theta^*))>0$ where
$\lambda_{\min}(H_n(\theta^*))$ denotes the smallest eigenvalue of
$H_n(\theta^*)$. If the matrices $H_n(\theta^*)$ are not symmetrical,
Assumption (\ref{Ass:LimitMatrix}') will be preferred since
\eqref{Ass:LimitMatrix} does not translate well for non-symmetrical
matrices.  Furthermore, if $X$ is stationary, all $f_i$'s are
invariant by translation, and the sequence of windows
  $\{W_n\}_{n \ge 1}$ satisfies \eqref{Ass:ShapeWindow} in
  Section~\ref{sec:asymptDPP_global}, then $H_n(\theta)$ converges
towards a matrix $H(\theta)$ explicitly given in Lemma
\ref{lem_general} below. Assumption \eqref{Ass:LimitMatrix} thus simply becomes $\inf_{\|\phi\|=1}\phi^T H(\theta^*)\phi>0$ and (\ref{Ass:LimitMatrix}') is satisfied whenever $H(\theta^*)$ is invertible by continuity of $H(\theta)$. In specific applications of Theorem~\ref{th:general}, further conditions on the sequence of observation windows $\{W_n\}_{n \ge 1}$ may be required, see e.g.\ \eqref{Ass:ShapeWindow} in Section~\ref{sec:asymptDPP_global}.
\begin{lem}\label{lem_general}
Assume \eqref{Ass:ShapeWindow}, \eqref{Ass:intenseBound}, \eqref{Ass:fbound} and let $\theta\in\R^p$. Suppose that all $\rho^{(q_i)}(\cdot;\theta)$'s and $f_i( \cdot;\theta)$'s are invariant by translation,  i.e. $f_i(u_1,u;\theta)=f_i(0,u-u_1;\theta)$ where $u$ is the vector $(u_2,\cdots,u_{q_i})$ and $u-u_1=(u_2-u_1,\cdots,u_{q_i}-u_1)$.
 If $u\mapsto f_i(0,u;\theta)$ is integrable for all $i$ such that $q_i\geq 2$, then $H_n(\theta)$ converges to a matrix $H(\theta)$. In particular, for all $i$ we have
$$\lim_{n\rightarrow\infty}H^i_n(\theta)=\int_{\|t\|\leq R} f_i(0,t;\ta) \nabla_\theta\rho^{(q_i)}(0,t;\ta)^T  \dd t.$$
\end{lem}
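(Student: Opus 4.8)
The plan is to compute the limit of $H_n^i(\theta)$ for each $i$ separately, splitting according to whether $q_i=1$ or $q_i\geq 2$, and then to assemble the block vector $H_n(\theta)$ from these limits.

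\medskip

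\noindent\textbf{Step 1 (the case $q_i=1$).} Here $H_n^i(\theta)=|W_n|^{-1}\int_{W_n} f_i(u;\theta)\nabla_\theta\rho^{(1)}(u;\theta)^T\,\der u$, and by translation invariance the integrand $f_i(u;\theta)\nabla_\theta\rho^{(1)}(u;\theta)^T$ is a constant matrix (it does not depend on $u$). Hence $H_n^i(\theta)$ is already equal to that constant, which is exactly the claimed formula $\int_{\|t\|\le R} f_i(0,t;\theta)\nabla_\theta\rho^{(q_i)}(0,t;\theta)^T\,\der t$ read with the convention that for $q_i=1$ there is no $t$ variable and the integral degenerates to the constant. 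So there is nothing to prove in this case beyond noting translation invariance. (If one prefers, one treats $q_i=1$ as trivially a limit of a constant sequence.)

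\medskip

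\noindent\textbf{Step 2 (the case $q_i\geq 2$).} Write $x=(x_2,\dots,x_{q_i})$ and use translation invariance to substitute $u_1\mapsto u_1$, $x_j\mapsto u_1+t_j$, so that
\[
H_n^i(\theta)=\frac{1}{|W_n|}\int_{W_n}\left(\int_{(W_n-u_1)^{q_i-1}} f_i(0,t;\theta)\nabla_\theta\rho^{(q_i)}(0,t;\theta)^T \cara{\diam(0,t)\le R}\,\der t\right)\der u_1,
\]
using \eqref{Ass:fbound} to insert the indicator $\cara{\diam(0,t)\le R}$ (which forces $\|t_j\|\le R$ for all $j$). The inner integrand, call it $g_i(t;\theta):=f_i(0,t;\theta)\nabla_\theta\rho^{(q_i)}(0,t;\theta)^T\cara{\diam(0,t)\le R}$, is supported in $\|t\|\le R$ and is dominated (entrywise) by an integrable function: $f_i(0,\cdot;\theta)$ is integrable by hypothesis, and $\nabla_\theta\rho^{(q_i)}(\cdot;\theta)$ is bounded by \eqref{Ass:intenseBound}. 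For $u_1$ in the ``core'' $W_n\ominus R$ (points of $W_n$ at distance $>R$ from $\partial W_n$) the ball of radius $R$ around $u_1$ lies inside $W_n$, so the inner integral equals the full integral $\int g_i(t;\theta)\,\der t=\int_{\|t\|\le R} f_i(0,t;\theta)\nabla_\theta\rho^{(q_i)}(0,t;\theta)^T\,\der t=:H^i(\theta)$; for $u_1$ in the boundary layer $W_n\setminus(W_n\ominus R)$ the inner integral is still bounded in norm by $\int\|g_i(t;\theta)\|\,\der t<\infty$. Therefore
\[
\left\|H_n^i(\theta)-H^i(\theta)\right\|\le \frac{|W_n\setminus(W_n\ominus R)|}{|W_n|}\cdot 2\int\|g_i(t;\theta)\|\,\der t\le \frac{2|\partial W_n\oplus R|}{|W_n|}\int\|g_i(t;\theta)\|\,\der t,
\]
which tends to $0$ by \eqref{Ass:ShapeWindow}. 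This gives $H_n^i(\theta)\to H^i(\theta)$ with the asserted formula.

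\medskip

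\noindent\textbf{Step 3 (assembly).} Since $H_n(\theta)=(H_n^1(\theta)^T,\dots,H_n^l(\theta)^T)^T$ and each block converges by Steps 1--2, the stacked matrix $H_n(\theta)$ converges to $H(\theta):=(H^1(\theta)^T,\dots,H^l(\theta)^T)^T$, which is the claim. The main obstacle is purely bookkeeping: correctly carrying out the change of variables in Step 2 so that $W_n$ is shifted to $W_n-u_1$, and then arguing — via the boundary-layer estimate and \eqref{Ass:ShapeWindow} — that replacing $(W_n-u_1)^{q_i-1}$ by all of $(\R^d)^{q_i-1}$ (equivalently, by the ball of radius $R$, thanks to \eqref{Ass:fbound}) introduces only an $o(1)$ error. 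Dominated convergence, or simply the explicit bound above, handles the interchange of limit and integral; no delicate estimate beyond \eqref{Ass:ShapeWindow}, \eqref{Ass:fbound} and the boundedness in \eqref{Ass:intenseBound} is needed.
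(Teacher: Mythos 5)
Your proof is correct and follows essentially the same route as the paper's: treat $q_i=1$ as a constant, and for $q_i\ge 2$ split $W_n$ into the eroded core $W_n^{\ominus R}$ (where the inner integral already equals the limiting integral, thanks to the support condition \eqref{Ass:fbound}) and the boundary layer, whose contribution is $o(1)$ by the integrability of $f_i(0,\cdot;\theta)$, the boundedness of $\nabla_\theta\rho^{(q_i)}$ from \eqref{Ass:intenseBound}, and Assumption \eqref{Ass:ShapeWindow}. The only cosmetic difference is that the paper bounds the boundary remainder entrywise via $\|\nabla_\theta\rho^{(q_i)}\|_\infty\int_{\|t\|\le R}|f_i(0,t;\theta)_k|\,\dd t$ rather than through your combined bound $\int\|g_i\|$, which is equivalent.
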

The proof of this lemma is available in the supplementary material.

\end{document}